\tikzset{->-/.style={decoration={
  markings,
  mark=at position .5 with {\arrow{>}}},postaction={decorate}}}
\newtheorem{theorem}{Theorem}[section]
\newtheorem{definition}[theorem]{Definition}
\newtheorem{lemma}[theorem]{Lemma}
\newtheorem{conjecture}[theorem]{Conjecture}
\newtheorem{corollary}[theorem]{Corollary}
\newtheorem{proposition}[theorem]{Proposition}
\def\C{\mathbb{C}}
\def\R{\mathbb{R}}
\def\Z{\mathbb{Z}}
\begin{document}

\title[Generalized Catalan Numbers and the Enumeration of Planar Embeddings]{Generalized Catalan Numbers and the Enumeration of Planar Embeddings}

\author{Jonathan E. Beagley \and Paul Drube}

\address{Department of Mathematics and Computer Science\\ Valparaiso University, 1900 Chapel Drive, Valparaiso, IN 46383}

\email{jon.beagley@valpo.edu, paul.drube@valpo.edu}

\subjclass[2010]{05A19, 53A60}

\begin{abstract}
The Raney numbers $R_{p,r}(n)$ are a two-parameter generalization of the Catalan numbers that were introduced by Raney in his investigation of functional composition patterns \cite{Raney}.  We give a new combinatorial interpretation for all Raney numbers in terms of planar embeddings of certain collections of trees, a construction that recovers the usual interpretation of the $p$-Catalan numbers in terms of $p$-ary trees via the specialization $R_{p,1}(n) = \prescript{}{p}c_n$.  Our technique leads to several combinatorial identities involving the Raney numbers and ordered partitions.  We then give additional combinatorial interpretations of specific Raney numbers, including an identification of $R_{p^2,p}(n)$ with oriented trees whose vertices satisfy the ``source or sink property".  We close with comments applying these results to the enumeration of connected (non-elliptic) $A_2$ webs that lack an internal cycle.
\end{abstract}

\maketitle

\section{Introduction}
\label{sec: intro}

We investigate a two-parameter generalization of the Catalan numbers known as the \emph{Raney numbers}, as first studied by Raney \cite{Raney}.  These Raney numbers are defined as $R_{p,r}(n) = \displaystyle\frac{r}{np+r}\binom{np+r}{n}$ for all positive integers $n,p,r$, and specialize to both the usual Catalan numbers as $R_{2,1}(n) = c_n$ and to the $p$-Catalan numbers as $R_{p,1}(n) = \prescript{}{p}c_n$.  Raney numbers have previously seen applications to compositional patterns \cite{Raney} and probability theory \cite{Mlot}.  In this paper we give a new set of combinatorial interpretations for $R_{p,r}$ that directly generalize the well-known application of $p$-Catalan numbers to the enumeration of $p$-ary trees \cite{HP},\cite{stanley}.  Our work can also be interpreted as a generalization of the planted plane tree enumeration techniques developed by Harary, Prins, \& Tutte \cite{HPT} and Klarner \cite{Klarner}, and our results specialize to all of those tree enumeration results via specific choices of $p$ and $r$.

We begin in Section \ref{sec: primary results} with a careful description of our ``generalized $p$-ary trees", which are in one-to-one correspondence with planar embeddings of trees with specific vertex structures.  Section \ref{sec: primary results} goes on to provide two independent methods for counting these generalized trees: Proposition \ref{thm: coral enumeration partitions} is a ``tiered approach" that generalizes a more specialized  result of Klarner \cite{Klarner}, whereas Theorem \ref{thm: coral enumeration Raney numbers} is a modification of a construction by Hilton and Pedersen \cite{HP} that directly relates our enumerations to the Raney numbers.  Our two techniques are brought together by the combinatorial identity of Theorem \ref{thm: Raney numbers and ordered partitions}, which is summarized below:

\begin{theorem}
\label{thm: intro theorem 1}
Let $n$ be a positive integer.  Then for all positive integers $p,r$ we have:

\begin{center}
$\displaystyle{R_{p,r}(n) = \frac{r}{np+r}\binom{np+r}{n} = \sum_\lambda \binom{r}{\lambda_1} \binom{p \lambda_1}{\lambda_2} \binom{p \lambda_2}{\lambda_3} \hdots \binom{p \lambda_{j-1}}{\lambda_j}}$
\end{center}
\noindent Where $\lambda = (\lambda_1,\lambda_2,...\lambda_j)$ ranges over all ordered partitions of $n$.
\end{theorem}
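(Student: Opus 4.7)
The plan is to prove the identity by double counting: both sides of the asserted equation will be shown to enumerate the same family of generalized $p$-ary trees introduced in Section \ref{sec: primary results}. Denote this family by $\mathcal{T}_{p,r}(n)$. On the one hand, Theorem \ref{thm: coral enumeration Raney numbers} asserts that $|\mathcal{T}_{p,r}(n)| = R_{p,r}(n) = \frac{r}{np+r}\binom{np+r}{n}$, which recovers the closed form on the left. On the other hand, Proposition \ref{thm: coral enumeration partitions} expresses the same cardinality as the tiered sum $\sum_\lambda \binom{r}{\lambda_1}\binom{p\lambda_1}{\lambda_2}\cdots\binom{p\lambda_{j-1}}{\lambda_j}$ over ordered partitions $\lambda$ of $n$, which is exactly the right-hand side. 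Equating the two enumerations of $|\mathcal{T}_{p,r}(n)|$ then yields the identity.

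To make the proof self-contained, I would include a short paragraph unpacking the combinatorial content of the partition sum. Each part $\lambda_i$ records the number of internal vertices of a tree in $\mathcal{T}_{p,r}(n)$ that occur in tier $i$. The factor $\binom{r}{\lambda_1}$ counts the ways to promote $\lambda_1$ of the $r$ initial slots to internal vertices, and for each subsequent tier, $\binom{p\lambda_{i-1}}{\lambda_i}$ counts the ways to promote $\lambda_i$ of the $p\lambda_{i-1}$ child positions produced by the previous tier into new internal vertices. Summing over all ordered partitions of $n$ into strictly positive parts exhausts the possible ``depth profiles'' for trees with $n$ total internal vertices. Compositions with $\lambda_1 > r$ automatically contribute zero because $\binom{r}{\lambda_1} = 0$, so the sum is naturally finite.

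The principal obstacle is purely bookkeeping: I must verify that the conventions of Proposition \ref{thm: coral enumeration partitions} and Theorem \ref{thm: coral enumeration Raney numbers} attach the parameters $(n,p,r)$ to the same family of objects, with the same role for each parameter. Given the framing of Section \ref{sec: primary results}, this alignment should be automatic, so no additional computation is required once both prior results are in hand. If an algebraic derivation were preferred instead, one could alternatively prove the identity by fixing $p$ and $r$ and comparing generating functions via Lagrange inversion, which is the standard tool for Raney-type closed forms; that route would furnish an independent verification but would bypass the tier-by-tier combinatorial interpretation that the tree perspective of this paper makes so natural.
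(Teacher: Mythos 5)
Your proposal is correct and follows essentially the same route as the paper, which obtains the identity as an immediate corollary by equating the two enumerations of $(p,r)$-coral diagrams with $n$ $p$-stars: the tiered count of Proposition \ref{thm: coral enumeration partitions} and the Raney-number count of Theorem \ref{thm: coral enumeration Raney numbers}. The extra paragraph you sketch unpacking the tier-by-tier meaning of the partition sum is exactly the content already contained in the proof of Proposition \ref{thm: coral enumeration partitions}, so nothing further is needed.
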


In Section \ref{sec: interpretations} we apply our primary results to give combinatorial interpretations of Raney numbers for specific values of $p$ and $r$.  Our most innovative result in this section is our identification of $R_{p^2,p}(k)$ with edge-oriented trees whose $(p+1)$-valent vertices coherently obey the ``source or sink" property.  In the case of $p=2$ this gives an enumeration of connected, non-elliptic $A_2$ webs with no internal cycles: a significant subclass of the non-elliptic $A_2$ webs introduced by Kuperberg \cite{Kup} to graphically encode the representation theory of the quantum enveloping algebra $U_q(sl_3)$.  In Corollary 3.5 we eventually prove the following, which characterizes a certain subset of $Hom_{sl_3}(V^{\otimes 3(k+1)}, \C)$ for the three-dimensional irreducible $sl_3$-module $V$:

\begin{theorem}
\label{thm: intro theorem 2}
$R_{4,2}(k)$ equals the number of connected, non-elliptic $A_2$ webs that lack an internal face and have a constant boundary string with $3(k+1)$ pluses.
\end{theorem}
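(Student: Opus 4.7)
The plan is to derive Theorem \ref{thm: intro theorem 2} as the $p=2$ specialization of the earlier identification of $R_{p^2,p}(k)$ with edge-oriented trees whose $(p+1)$-valent interior vertices coherently satisfy the source-or-sink property. With $p=2$ this gives oriented trees having trivalent interior vertices each of which is a source (all edges outgoing) or a sink (all edges incoming); the central task is then to show that these coincide with the non-elliptic $A_2$ webs appearing in the statement.

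First I would recall Kuperberg's definition of a non-elliptic $A_2$ web: a plane graph embedded in a disk with oriented edges and trivalent interior vertices, where each interior vertex is either a source or a sink, no internal face is a bigon or a square, and each boundary edge is labeled $+$ or $-$ according to whether it points outward or inward at the boundary circle. Observing that a connected web with no internal face is necessarily a plane tree, every cycle-based non-elliptic restriction is automatically satisfied, so the enumeration reduces to counting planar trees with trivalent interior vertices, source-or-sink orientations at every interior vertex, and a prescribed boundary sign sequence.

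Next I would impose the all-$+$ boundary condition of length $3(k+1)$: every boundary edge points outward, which forces every interior vertex adjacent to a leaf to be a source. Since sources and sinks must alternate along each tree edge, the tree is bipartite with respect to this coloring, and a short handshake count shows that such a tree with $3(k+1)$ leaves has exactly $k$ sinks and $2k+1$ sources. This identifies these webs precisely with the $p=2$ specialization of the oriented trees enumerated by $R_{p^2,p}(k)$ in the earlier theorem, with $k$ indexing the minority color class. The count $R_{4,2}(k)$ then follows immediately.

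The main obstacle is the bookkeeping required to align the three viewpoints---generalized $(p^2,p)$-ary trees, source-or-sink oriented trees, and planar $A_2$ webs---on the nose. In particular, one must verify that the source/sink bipartition forced by the all-$+$ boundary produces exactly the generalized trees of Section \ref{sec: interpretations} (with $p=2$) rather than some mirror or relabeled version, that the planar-embedding convention used for the webs matches that used for the trees, and that the ``no internal face'' hypothesis genuinely suffices to reduce the web-theoretic problem to the tree-theoretic one. Once these verifications are in place, the theorem drops out as an instance of the already-established tree enumeration.
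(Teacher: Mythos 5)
Your proposal is correct and follows essentially the same route as the paper: Corollary 3.5 there is obtained precisely by specializing Proposition \ref{thm: raney interpretation p=r2} to $p=2$ and observing that a connected, source--sink oriented trivalent plane tree with all boundary edges oriented outward is exactly a connected $A_2$ web with no internal face and constant $+$ boundary string, the non-elliptic condition being vacuous for trees. Your additional handshake count ($k$ sinks, $2k+1$ sources, $3(k+1)$ leaves) is consistent with the paper's star-attachment bookkeeping and is a harmless elaboration rather than a different argument.
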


After giving another application of $R_{4,1}$ to a different class of non-elliptic $A_2$ webs, we close the paper with a series of conjectures that hope to generalize our results to $sl_n$ webs, which similarly that encode the representation theory of $U_q(sl_n)$.  In particular, we assert a correlation between the Raney number $R_{n+1,n-1}(k)$ and linearly-independent connected $sl_n$ webs that lack an internal cycle and have a boundary string corresponding to $n(k+1)$ total 1's.

\section{Raney Numbers \& the Enumeration of Planar Tree Embeddings}
\label{sec: primary results}

In this section we present the primary construction of this paper, which gives a geometric realization of the Raney numbers in terms of planar embeddings of certain types of trees.  We begin by introducing our graph theoretic terminology:

\begin{definition}
\label{def: star}
Let $p$ be a positive integer.  Then a \textbf{p-star} is a rooted tree with $p$ terminal edges lying above a single base vertex.
\end{definition}


In this section we will directly use $p$-stars as building blocks for larger graphs.  In Section \ref{sec: primary results} we will modify stars by allowing their edges to be directed, or by replacing the basic $p$-star with more complicated subgraphs that retain a single base vertex and $p$ terminal edges.  For a fixed $p$, $p$-stars are used to construct planar graphs that we refer to as coral diagrams:


\begin{definition}
\label{def: coral diagram}
Let $p,r$ be positive integers.  A \textbf{coral diagram of type $(p,r)$} is a rooted tree that is constructed from a $(r+1)$-valent base vertex via the repeated placement of $p$-stars atop terminal edges that are not the leftmost edge adjacent to the base vertex.
\end{definition}

\begin{figure}[h!]
\begin{tikzpicture}
	[scale=.5,auto=left,every node/.style={circle,fill=black,inner sep=1.7pt}]
	\node (n1) at (2,0) {};
	\node (n2) at (0,4.5) {};
	\node (n3) at (2,1.5) {};
	\node (n4) at (4,1.5) {};
	\draw[thick,bend left=25] (n1) to (n2);
	\draw[thick] (n1) to (n3);
	\draw[thick] (n1) to (n4);
\end{tikzpicture}
\hspace{.15in}
\scalebox{3}{\raisebox{12pt}{$\Rightarrow$}}
\hspace{.15in}
\begin{tikzpicture}
	[scale=.5,auto=left,every node/.style={circle,fill=black,inner sep=1.7pt}]
	\node (n1) at (2,0) {};
	\node (n2) at (0,4.5) {};
	\node (n3) at (2,1.5) {};
	\node (n4) at (4,1.5) {};
	\node (n5) at (1.3,3.0) {};
	\node (n6) at (2.7,3.0) {};
	\node (n7) at (3.3,3.0) {};
	\node (n8) at (4.7,3.0) {};
	\draw[thick,bend left=25] (n1) to (n2);
	\draw[thick] (n1) to (n3);
	\draw[thick] (n1) to (n4);
	\draw[thick] (n3) to (n5);
	\draw[thick] (n3) to (n6);
	\draw[thick] (n4) to (n7);
	\draw[thick] (n4) to (n8);
\end{tikzpicture}
\hspace{.15in}
\scalebox{3}{\raisebox{12pt}{$\Rightarrow$}}
\hspace{.15in}
\begin{tikzpicture}
	[scale=.5,auto=left,every node/.style={circle,fill=black,inner sep=1.7pt}]
	\node (n1) at (2,0) {};
	\node (n2) at (0,4.5) {};
	\node (n3) at (2,1.5) {};
	\node (n4) at (4,1.5) {};
	\node (n5) at (1.3,3.0) {};
	\node (n6) at (2.7,3.0) {};
	\node (n7) at (3.3,3.0) {};
	\node (n8) at (4.7,3.0) {};
	\node (n9) at (2.0,4.5) {};
	\node (n10) at (3.4,4.5) {};
	\draw[thick,bend left=25] (n1) to (n2);
	\draw[thick] (n1) to (n3);
	\draw[thick] (n1) to (n4);
	\draw[thick] (n3) to (n5);
	\draw[thick] (n3) to (n6);
	\draw[thick] (n4) to (n7);
	\draw[thick] (n4) to (n8);
	\draw[thick] (n6) to (n9);
	\draw[thick] (n6) to (n10);
\end{tikzpicture}
\caption{Construction of a (2,2)-coral diagram with three 2-stars}
\label{fig: coral diagram}
\end{figure}
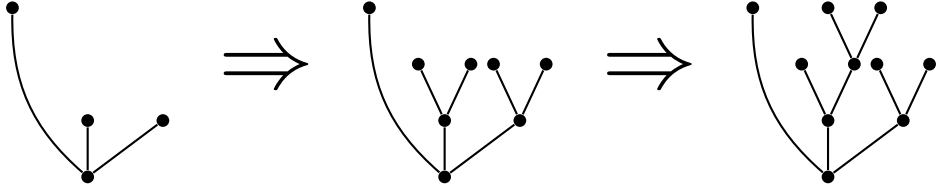

We refer to the vertices that serve as base points for our $p$-stars as the \emph{attachment sites} of our coral diagram.  In our upcoming constructions it will be useful to think of coral diagrams as a collection of $k+1$ trees ($k$ $p$-stars and a single ``base" $(r+1)$-star) where we have identified the vertices corresponding to the $k$ attachment sites.


The condition from Definition \ref{def: coral diagram} that we cannot add stars to the leftmost initial edge in a coral diagram is absolutely essential for our combinatorial interpretations.  Our primary concern is planar embeddings of graphs, with equivalence given by homeomorphisms that fix a linear ordering of the terminal vertices.  When associating planar embeddings with rooted trees, the primary difficulty is consistently dealing with the fact that a single embedding may be rooted at multiple distinct vertices.  Not attaching stars to the leftmost edge of our coral diagram gives us a consistent way of selecting a base vertex for our planar embeddings and ensures that homeomorphic trees cannot correspond to distinct coral diagrams: each equivalence class of homeomorphic planar embeddings contains a unique member where there is a single edge connecting the leftmost boundary point with a ``bottom" vertex.  Although somewhat awkward from a diagrammatic perspective, this way of selecting root vertices will allow for a particularly elegant correspondence between coral diagrams and Raney numbers in Theorem \ref{thm: coral enumeration Raney numbers}.

In Figure \ref{fig: tree rotation} we show the (2,2)-coral diagram from Figure \ref{fig: coral diagram} and a homeomorphic embedding of the same graph \footnote{Assume in all of our figures that terminal edges have been extended to a fixed boundary line at the top of the diagram.  In all constructions we assume a boundary that is homeomorphic to $\R$}.  Notice that the embedding on the right does not represent a valid coral diagram of any type.

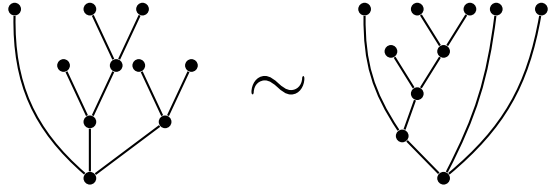
\begin{figure}[h!]
\begin{tikzpicture}
	[scale=.5,auto=left,every node/.style={circle,fill=black,inner sep=1.7pt}]
	\node (n1) at (2,0) {};
	\node (n2) at (0,4.5) {};
	\node (n3) at (2,1.5) {};
	\node (n4) at (4,1.5) {};
	\node (n5) at (1.3,3.0) {};
	\node (n6) at (2.7,3.0) {};
	\node (n7) at (3.3,3.0) {};
	\node (n8) at (4.7,3.0) {};
	\node (n9) at (2.0,4.5) {};
	\node (n10) at (3.4,4.5) {};
	\draw[thick,bend left=25] (n1) to (n2);
	\draw[thick] (n1) to (n3);
	\draw[thick] (n1) to (n4);
	\draw[thick] (n3) to (n5);
	\draw[thick] (n3) to (n6);
	\draw[thick] (n4) to (n7);
	\draw[thick] (n4) to (n8);
	\draw[thick] (n6) to (n9);
	\draw[thick] (n6) to (n10);
\end{tikzpicture}
\hspace{.15in}
\scalebox{2.5}{\raisebox{12pt}{$\sim$}}
\hspace{.15in}
\begin{tikzpicture}
	[scale=.5,auto=left,every node/.style={circle,fill=black,inner sep=1.7pt}]
	\node (n1) at (2.1,0.0) {};
	\node (n2) at (1.0,1.125) {};
	\node (n3) at (0.0,4.5) {};
	\node (n4) at (0.7,3.375) {};
	\node (n5) at (1.4,2.25) {};
	\node (n6) at (1.4,4.5) {};
	\node (n7) at (2.1,3.375) {};
	\node (n8) at (2.8,4.5) {};
	\node (n9) at (3.5,4.5) {};
	\node (n10) at (4.7,4.5) {};
	\draw[thick] (n1) to (n2);
	\draw[thick, bend left=17] (n2) to (n3);
	\draw[thick] (n2) to (n5);
	\draw[thick] (n5) to (n4);
	\draw[thick] (n5) to (n7);
	\draw[thick] (n7) to (n6);
	\draw[thick] (n7) to (n8);
	\draw[thick, bend right=10] (n1) to (n9);
	\draw[thick, bend right=20] (n1) to (n10);
\end{tikzpicture}
\caption{A (2,2)-coral diagram and an equivalent planar embedding}
\label{fig: tree rotation}
\end{figure}

We are now ready for our primary results enumerating coral diagrams.  Henceforth denote the number of distinct $(p,r)$-coral diagrams with exactly $k$ total $p$-stars added to the base star by $T_{p,r}(k)$.  The proposition below is the more direct of our two methodologies, and will be used in Theorem \ref{thm: Raney numbers and ordered partitions} to introduce an entirely new combinatorial identity involving the Raney numbers.  Notice that Proposition \ref{thm: coral enumeration partitions} closely resembles (20) from \cite{Klarner} after specializing to $r=1$.

\begin{proposition}
\label{thm: coral enumeration partitions}
Let $p,r$ be positive integers.  Then the number of coral diagrams of type $(p,r)$ with precisely $k$ p-stars is:

\begin{center}
$\displaystyle{T_{p,r}(k) = \sum_\lambda \binom{r}{\lambda_1} \binom{p \lambda_1}{\lambda_2} \binom{p \lambda_2}{\lambda_3} \hdots \binom{p \lambda_{j-1}}{\lambda_j}}$
\end{center}

\noindent Where $\lambda = (\lambda_1,\lambda_2,...\lambda_j)$ varies over all ordered partitions of $k$ (of any length $j \geq 1$).
\end{proposition}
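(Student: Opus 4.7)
The plan is to stratify the $p$-stars of any coral diagram by their graph-theoretic distance from the base vertex, and to count the number of coral diagrams compatible with each stratification profile. For a coral diagram $D$ of type $(p,r)$ with $k$ total $p$-stars, let $\lambda_i$ denote the number of $p$-stars whose base vertex lies at distance $i$ from the base vertex of $D$, and set $\lambda = (\lambda_1, \ldots, \lambda_j)$, where $j$ is the largest index with $\lambda_j > 0$. The first step is to verify that $\lambda$ is always an ordered partition of $k$. The identity $\sum_i \lambda_i = k$ is immediate, and each intermediate $\lambda_i$ must be positive because a tier-$i$ $p$-star has its base attached via an edge to either the original base vertex (if $i = 1$) or to the base vertex of some tier-$(i-1)$ $p$-star; in particular, $\lambda_i > 0$ forces $\lambda_{i-1} > 0$ for all $i \geq 2$.

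Next, for each fixed ordered partition $\lambda = (\lambda_1, \ldots, \lambda_j)$ of $k$, I plan to count the coral diagrams whose profile equals $\lambda$ by building the diagram tier by tier. At tier $1$, the base vertex has $r+1$ adjacent edges; Definition \ref{def: coral diagram} forbids attachment to the leftmost, leaving $r$ eligible terminal edges, and choosing which $\lambda_1$ of them receive $p$-stars contributes a factor of $\binom{r}{\lambda_1}$. Each tier-$1$ star produces $p$ new terminal edges at tier $2$, so we select $\lambda_2$ of the resulting $p \lambda_1$ eligible edges, contributing $\binom{p \lambda_1}{\lambda_2}$. Iterating, the number of choices at tier $i > 1$ is $\binom{p \lambda_{i-1}}{\lambda_i}$. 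Multiplying these per-tier counts gives one summand of the claimed formula, and summing over all ordered partitions of $k$ reproduces the expression for $T_{p,r}(k)$.

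The main obstacle I anticipate is justifying that the tier-by-tier subset choices are in genuine bijection with the coral diagrams themselves, rather than yielding only a one-sided count. This hinges on the planar structure: at every vertex, the upward edges inherit a canonical left-to-right ordering from the ambient planar embedding with boundary homeomorphic to $\R$, so designating a subset of the eligible edges at a given tier uniquely specifies where the next layer of $p$-stars is placed and produces a well-defined coral diagram. The preceding discussion in Section \ref{sec: primary results} about the leftmost-edge restriction eliminating rooting ambiguity among homeomorphic planar embeddings is precisely what guarantees that distinct sequences of subset choices yield distinct coral diagrams and that each coral diagram is produced in exactly one way, completing the bijection.
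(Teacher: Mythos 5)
Your proposal is correct and follows essentially the same route as the paper: both stratify the $p$-stars into tiers (the paper by building upward "as low as possible," you by graph distance from the base vertex, which yields the same decomposition), obtain the factors $\binom{r}{\lambda_1}\binom{p\lambda_1}{\lambda_2}\cdots\binom{p\lambda_{j-1}}{\lambda_j}$ from the per-tier attachment choices, and appeal to the rooted planar structure and the leftmost-edge restriction for distinctness. Your intrinsic definition of the tiers makes the exhaustiveness of the stratification slightly more explicit, but the argument is the same.
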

\begin{proof}
As in Figure \ref{fig: coral diagram}, we will construct our coral diagram by attaching our $k$ stars one ``tier" at a time, beginning with the base tree and working upward.  We assume that all stars are attached as low as possible, so that if a vertex is not used as an attachment point for a given tier that vertex cannot serve as an attachment point for later tiers.  Hence the only valid attachment points at each tier are the terminal vertices of stars added in the previous tier.

So let $\lambda = (\lambda_1, \lambda_2, \hdots, \lambda_j)$ be an ordered partition of $k$, and consider the number of $(p,r)$-coral diagrams with $\lambda_1$ stars attached in the first tier, $\lambda_2$ stars attached in the second tier, etc.  For the first tier there are $r$ available attachment sites, corresponding to the $r$ terminal edges adjacent to the base that are not the leftmost edge.  Hence there are $\binom{r}{\lambda_1}$ distinct ways to attach stars of this tier.  For the $j$\textsuperscript{th} tier ($j > 1$) there are $p \lambda_{j-1}$ valid attachment sites, corresponding to the top vertices of the $\lambda_{j-1}$ $p$-stars from the previous tier.  Thus there are $\binom{p \lambda_{j-1}}{\lambda_j}$ distinct ways to attach stars of this tier.

As the resulting diagrams are rooted, and since we are not allowing star attachment to the leftmost initial edge, all trees produced in this manner are non-equivalent embeddings.  This leaves $\binom{r}{\lambda_1} \binom{p \lambda_1}{\lambda_2} \binom{p \lambda_2}{\lambda_3} \hdots \binom{p \lambda_{j-1}}{\lambda_j}$ distinct trees corresponding to our partition $\lambda$.  As coral diagrams produced from distinct partitions are clearly non-equivalent, this proves the result.
\end{proof}

Our second construction directly relates the number of $(p,r)$-coral diagrams to the Raney numbers $R_{p,r}(k)$.  We preface our result with a characterization of the Raney numbers that is proven in by Hilton and Pedersen in \cite{HP}. \footnote{Our Raney number $R_{p,r}(n)$ corresponds to $_pd_{qk} = d_{qk}$ in \cite{HP} via $q = p-r, k=n+1$}

\begin{lemma}\cite[Theorem 2.6]{HP}
\label{thm: HiltonPederson lemma}
Let $_pc_k = \frac{1}{k} \binom{pk}{k-1}$ be the $k$\textsuperscript{th} $p$-Catalan number, and let $R_{p,r}(k)$ denote the Raney number.  Then:

\begin{center}
$\displaystyle{R_{p,r}(k) = \sum_{i_1 + \hdots + i_r = k} \ _pc_{i_1} \ _pc_{i_2} \ \hdots \ _pc_{i_r}}$
\end{center}
\end{lemma}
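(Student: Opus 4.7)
The natural approach is via generating functions and Lagrange inversion. Let $f(x) = \sum_{k \geq 0} {}_pc_k\, x^k$, adopting the convention ${}_pc_0 = 1$. The key structural fact is that $f$ satisfies the functional equation $f(x) = 1 + x f(x)^p$, which can be verified from the classical recursive decomposition of a $p$-ary tree into a root together with $p$ ordered $p$-ary subtrees, or checked directly from the closed form ${}_pc_k = \frac{1}{k}\binom{pk}{k-1}$ using Lagrange inversion on the Raney numbers themselves.

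Given the functional equation, I would compute $[x^k] f(x)^r$ in two independent ways. First, the Cauchy product expansion of $f^r$ immediately yields
\[ [x^k] f(x)^r = \sum_{i_1 + \cdots + i_r = k} {}_pc_{i_1}\, {}_pc_{i_2} \cdots {}_pc_{i_r}, \]
which is exactly the right-hand side of the lemma. Second, rewriting the functional equation as $x = (f-1)/f^p$ and setting $z = f - 1$ puts the relation in the standard Lagrange form $z = x(z+1)^p$. Applying the Lagrange inversion formula with $H(z) = (z+1)^r$ yields $[x^k]f(x)^r = \tfrac{r}{k}\binom{pk+r-1}{k-1}$, and a one-line factorial manipulation rewrites this as $\tfrac{r}{pk+r}\binom{pk+r}{k} = R_{p,r}(k)$. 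Equating the two expressions for $[x^k] f^r$ finishes the proof.

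The main obstacle is quite mild: it comes down to carefully tracking the exponents in the Lagrange inversion step and verifying the elementary identity $\tfrac{r}{k}\binom{pk+r-1}{k-1} = \tfrac{r}{pk+r}\binom{pk+r}{k}$. A more combinatorial alternative, which would sit comfortably alongside the coral-diagram constructions of this paper, is to interpret $R_{p,r}(k)$ as counting ordered forests of $r$ $p$-ary trees with $k$ internal nodes in total (a standard Fuss--Catalan identity) and then condition on the distribution $(i_1,\ldots,i_r)$ of internal nodes among the individual trees, which produces the right-hand side directly. Either route delivers the identity, but the generating-function approach is the most economical because it bypasses the need to establish the forest interpretation of $R_{p,r}$ independently.
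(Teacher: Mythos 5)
Your argument is correct and complete, and it is worth noting at the outset that the paper itself offers no proof of this lemma at all: it is imported verbatim from Hilton and Pedersen \cite{HP}, so you are supplying an argument where the authors supply only a citation. Your route is the standard generating-function one: the $p$-ary tree series satisfies $f = 1 + x f^p$, the Cauchy product identifies the right-hand side of the lemma with $[x^k]f(x)^r$, and Lagrange inversion applied to $z = x(1+z)^p$ with $H(z) = (1+z)^r$ gives $[x^k]f(x)^r = \frac{r}{k}\binom{pk+r-1}{k-1}$; the closing identity $\frac{r}{k}\binom{pk+r-1}{k-1} = \frac{r}{pk+r}\binom{pk+r}{k}$ is genuine (both sides equal $\frac{r\,(pk+r-1)!}{k!\,(pk+r-k)!}$), and is in fact the same rewriting the authors use silently when they state $R_{p,r}(k) = \binom{pk+r-1}{k-1}\frac{r}{k}$ in Theorem \ref{thm: coral enumeration Raney numbers}. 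One caution about the ``combinatorial alternative'' you sketch at the end: within this paper that route would be circular, because the interpretation of $R_{p,r}(k)$ as counting ordered $r$-tuples of $p$-ary trees with $k$ internal nodes in total is precisely what the authors \emph{derive from} this lemma in the proof of Theorem \ref{thm: coral enumeration Raney numbers}; it is not available as an independent input here. Your instinct that the generating-function proof is the right one for this setting is therefore correct. The only housekeeping item is the case $k=0$, where $\frac{r}{k}\binom{pk+r-1}{k-1}$ is formally undefined but both sides of the lemma equal $1$ under the convention ${}_pc_0 = 1$.
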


\begin{theorem}
\label{thm: coral enumeration Raney numbers}
Let $p,r$ be positive integers.  Then the number of $(p,r)$-coral diagrams with precisely $k$ p-stars equals the k\textsuperscript{th} evaluation of the Raney number $R_{p,r}$:

\begin{center}
$\displaystyle{T_{p,r}(k) = R_{p,r}(k) = \binom{pk+r-1}{k-1} \frac{r}{k}}$
\end{center}
\end{theorem}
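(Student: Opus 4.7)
The plan is to combine Lemma~\ref{thm: HiltonPederson lemma} with a structural decomposition of $(p,r)$-coral diagrams into $r$ independent pieces. The overall strategy is to show that $T_{p,r}(k)$ satisfies the same convolution identity that the Hilton--Pedersen lemma supplies for $R_{p,r}(k)$, then conclude by that lemma.

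First I would handle the base case $T_{p,1}(k) = \prescript{}{p}c_k$, which asserts that $(p,1)$-coral diagrams with $k$ stars are enumerated by the $p$-Catalan numbers. A natural bijection sends a $(p,1)$-coral diagram to a planar $p$-ary tree with $k$ internal vertices: delete the base vertex together with its (unique) leftmost edge, and read each $p$-star as an internal node with $p$ ordered children. The leftmost-edge restriction of Definition~\ref{def: coral diagram} guarantees this map is well-defined and respects planar equivalence.

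Next I would perform the key decomposition. The base vertex of a $(p,r)$-coral diagram has $r+1$ incident edges, one of which (the leftmost) carries no $p$-stars; each of the remaining $r$ edges supports an independent subtree. Viewing that subtree together with a fictitious leftmost stem off of the base vertex, it is itself a $(p,1)$-coral diagram. If the $j$-th (left-to-right) subtree contains $i_j$ stars, then $i_1 + i_2 + \cdots + i_r = k$, and conversely any choice of composition together with independent $(p,1)$-subtrees assembles into a unique $(p,r)$-coral diagram. Therefore
\begin{equation*}
T_{p,r}(k) \;=\; \sum_{i_1 + \cdots + i_r = k} T_{p,1}(i_1)\, T_{p,1}(i_2) \cdots T_{p,1}(i_r) \;=\; \sum_{i_1 + \cdots + i_r = k} \prescript{}{p}c_{i_1}\, \prescript{}{p}c_{i_2} \cdots \prescript{}{p}c_{i_r} \;=\; R_{p,r}(k),
\end{equation*}
with the last equality given by Lemma~\ref{thm: HiltonPederson lemma}. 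The closed form $\binom{pk+r-1}{k-1}\frac{r}{k}$ then follows from the definition $R_{p,r}(k) = \frac{r}{kp+r}\binom{kp+r}{k}$ by a routine factorial rearrangement.

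I expect the principal obstacle to be justifying that the decomposition above is a genuine bijection on planar embeddings, rather than just on abstract graphs. The leftmost-edge condition in Definition~\ref{def: coral diagram} plays a dual role here: at the global base vertex it pins down a canonical root so that planar rotations do not collapse or create extra coral diagrams, and on each of the $r$ subtrees the inherited leftmost direction identifies the subtree unambiguously as a $(p,1)$-coral diagram rather than a rotated variant. A clean alternative, if the bijective decomposition proves delicate to write down, would be to manipulate Proposition~\ref{thm: coral enumeration partitions} directly and match coefficients with an expansion of $R_{p,r}(k)$ obtained by iterating Lemma~\ref{thm: HiltonPederson lemma}; but the decomposition route above is cleaner and matches the flow already established in the exposition.
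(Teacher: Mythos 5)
Your proposal is correct and follows essentially the same route as the paper: the paper likewise splits the coral diagram at the $r$ non-leftmost attachment sites of the base star into an ordered $r$-tuple of trees built from $p$-stars, identifies each with a structure counted by $\prescript{}{p}c_{i_j}$, and concludes via the convolution formula of Lemma~\ref{thm: HiltonPederson lemma}. Your extra framing of each subtree as a $(p,1)$-coral diagram is just a slightly more explicit phrasing of the paper's appeal to the known fact that $\prescript{}{p}c_j$ counts connected trees assembled from $j$ $p$-stars.
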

\begin{proof}
It is well established (see \cite{HP}) that $_pc_j$ equals the number of connected trees constructed from $j$ total $p$-stars.  With this interpretation, Lemma \ref{thm: HiltonPederson lemma} states that the Raney number $R_{p,r}(k)$ counts the number of distinct ordered $r$-tuples of trees constructed from p-stars such that a total of k total p-stars are utilized across the entire tuple.

Now consider a $(p,r)$-coral diagram.  We may subdivide the coral diagram into $r+1$ subgraphs (some of which may be empty): one corresponding to the base $r$-star and one corresponding to everything added atop each of the $r$ attachment sites of the base $(r+1)$-star.  Any coral diagram may then be described by an $r$-tuple of trees constructed from $p$-stars.  As the leftmost edge of the base star still isn't a valid attachment site, it isn't possible to achieve equivalent coral diagrams from distinct $r$-tuples of trees.  If we fix the total number of $p$-stars to be used at $k \geq 0$, it follows that $T_{p,r} = \displaystyle{\sum_{i_1 + \hdots + i_r = k} \ _pc_{i_1} \ _pc_{i_2} \ \hdots \ _pc_{i_r}}$.  Lemma \ref{thm: HiltonPederson lemma} then gives the desired result.
\end{proof}

As a quick corollary of Proposition \ref{thm: coral enumeration partitions} and Theorem \ref{thm: coral enumeration Raney numbers}, we have the following combinatorial identity that relates the Raney numbers to ordered partitions

\begin{theorem}
\label{thm: Raney numbers and ordered partitions}
Let $p,r$ be positive integers, and consider the Raney number $R_{p,r}$.  Then:

\begin{center}
$\displaystyle{R_{p,r}(k) = \sum_\lambda \binom{r}{\lambda_1} \binom{p \lambda_1}{\lambda_2} \binom{p \lambda_2}{\lambda_3} \hdots \binom{p \lambda_{j-1}}{\lambda_j}}$
\end{center}
\noindent Where $\lambda = (\lambda_1,\lambda_2,...\lambda_j)$ varies over all ordered partitions of $k$ (of any length $j \geq 1$).
\end{theorem}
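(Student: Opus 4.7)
The plan is to obtain this identity as an immediate corollary of the two preceding results, which together give two distinct closed forms for the same enumeration $T_{p,r}(k)$. Proposition \ref{thm: coral enumeration partitions} establishes that
\begin{equation*}
T_{p,r}(k) \;=\; \sum_\lambda \binom{r}{\lambda_1}\binom{p\lambda_1}{\lambda_2}\binom{p\lambda_2}{\lambda_3}\cdots\binom{p\lambda_{j-1}}{\lambda_j},
\end{equation*}
by tier-by-tier construction of a $(p,r)$-coral diagram, while Theorem \ref{thm: coral enumeration Raney numbers} establishes that $T_{p,r}(k) = R_{p,r}(k)$ by subdividing a $(p,r)$-coral diagram into an ordered $r$-tuple of connected trees built from $p$-stars and invoking Lemma \ref{thm: HiltonPederson lemma}. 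Equating the two expressions for $T_{p,r}(k)$ yields precisely the claimed identity.

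Concretely, the steps are: first, cite Proposition \ref{thm: coral enumeration partitions} to rewrite the right-hand sum as $T_{p,r}(k)$; second, cite Theorem \ref{thm: coral enumeration Raney numbers} to rewrite $T_{p,r}(k)$ as $R_{p,r}(k)$; third, conclude. The two sides of the asserted identity each count the same family of planar embeddings, so their equality is forced once both prior results are in hand.

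There is no genuine obstacle here, since the combinatorial work has already been carried out in Section \ref{sec: primary results}; the corollary simply isolates the identity that falls out of comparing two enumerations of a single family. One could alternatively attempt a direct algebraic derivation by expanding $R_{p,r}(k)$ via Lagrange inversion, or a purely bijective proof matching ordered partitions to some intermediate combinatorial structure, but both routes would effectively reprove one of the two prior results. The presentation as a corollary is therefore the cleanest option, and makes the identity of Theorem \ref{thm: intro theorem 1} from the introduction explicit.
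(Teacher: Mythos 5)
Your proposal is correct and matches the paper exactly: the paper presents this identity as a quick corollary obtained by equating the expression for $T_{p,r}(k)$ from Proposition \ref{thm: coral enumeration partitions} with the identification $T_{p,r}(k) = R_{p,r}(k)$ from Theorem \ref{thm: coral enumeration Raney numbers}. No further argument is needed.
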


Notice that this is a distinct reduction of $R_{p,r}(k)$ into a summations over ordered partitions of $k$ than the one presented in Lemma \ref{thm: HiltonPederson lemma}, even though both summations involve terms of the form $\binom{p\lambda_j}{m}$.  The decomposition of Lemma \ref{thm: HiltonPederson lemma} ranges over weak partitions and follows from ``horizontally" dividing our coral diagram into a tuple of attached trees, whereas the summation of Theorem \ref{thm: Raney numbers and ordered partitions} ranges over (strong) partitions and follows from ``vertically" dividing our coral diagram into tiers. 

\section{Combinatorial Interpretations of $R_{p,r}$}
\label{sec: interpretations}

The remainder of this paper is devoted to combinatorial interpretations of the Raney numbers $R_{p,r}$ for specific choices of $p,r$.  Our primary tools are Theorem \ref{thm: coral enumeration Raney numbers} and the coral diagram framework that it suggests.  To begin with the least derived (and least informative) result we have:

\begin{proposition}
\label{thm: raney interpretation general p r}
Let $p,r$ be positive integers, then $R_{p,r}(k)$ equals the number of distinct planar embeddings of trees with $k+1$ internal vertices such that all internal vertices are $(p+1)$-valent except for the vertex incident upon the leftmost terminal edge, which is $(r+1)$-valent.
\end{proposition}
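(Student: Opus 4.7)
The plan is to reduce this directly to Theorem \ref{thm: coral enumeration Raney numbers} by constructing a bijection between $(p,r)$-coral diagrams containing exactly $k$ $p$-stars and the planar tree embeddings described in the proposition. Since that theorem identifies $T_{p,r}(k)$ with $R_{p,r}(k)$, exhibiting such a bijection finishes the argument.

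First I would verify the forward direction: every $(p,r)$-coral diagram with $k$ $p$-stars is already a planar tree embedding of the stated form. The base star contributes a single $(r+1)$-valent internal vertex, and each of the $k$ $p$-star attachments contributes a center vertex incident to $p$ upward edges and one downward edge connecting to its parent attachment site, which is therefore $(p+1)$-valent. Summing these contributions gives $k+1$ internal vertices with the prescribed valencies, and the leftmost edge adjacent to the base vertex remains terminal throughout the construction since Definition \ref{def: coral diagram} explicitly forbids attaching stars to it.

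For the reverse direction I would show that any such planar tree embedding arises from a unique coral diagram. Once the planar embedding is fixed, the leftmost terminal edge is well-defined, and the internal vertex adjacent to it is (by hypothesis) the unique $(r+1)$-valent vertex; this vertex is designated the base. Orienting the remaining edges toward this base gives every other internal vertex precisely one downward parent edge and $p$ upward edges, which is exactly the data of a $p$-star attached atop its parent edge. Reading the resulting rooted tree from the base upward, tier by tier, reproduces the coral diagram construction of Definition \ref{def: coral diagram}, and the ``no attachment to the leftmost edge'' rule holds automatically because that edge is terminal by hypothesis.

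The main obstacle will be handling the planar embedding equivalence carefully: distinct rotations of the same abstract tree can realize the same underlying graph in a variety of ways, yet only one such rotation is a valid coral diagram (as illustrated by Figure \ref{fig: tree rotation}). The leftmost terminal edge condition is precisely the device that pins down this canonical rotation, so I must confirm that the bijection is well-defined on equivalence classes of planar embeddings rather than on individual drawings, and in particular that the valency hypothesis alone (without an ambient embedding) would not suffice. Once this technical point is checked, Theorem \ref{thm: coral enumeration Raney numbers} immediately delivers the count $R_{p,r}(k)$.
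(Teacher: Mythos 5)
Your proposal is correct and follows exactly the route the paper takes: the paper's proof is simply the one-line observation that the statement ``follows directly from Theorem \ref{thm: coral enumeration Raney numbers} and the definition of coral diagram,'' and your two-directional bijection (coral diagrams are such embeddings; any such embedding is canonically rooted at the $(r+1)$-valent vertex on the leftmost terminal edge and decomposes into $p$-stars) is precisely the verification being left implicit. Your attention to the rotation/rooting subtlety is the right technical point to check, and it is handled by the same ``no attachment to the leftmost edge'' device the paper relies on.
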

\begin{proof}
Follows directly from Proposition \ref{thm: coral enumeration Raney numbers} and the definition of coral diagram.
\end{proof}

Obviously, the situation becomes far more interesting if $p=r$, with $R_{p,p}(k)$ enumerating distinct planar embeddings of wholly $(p+1)$-valent trees.  This specialization also yields a new proof of the following Raney number identity:

\begin{proposition}
\label{thm: raney interpretation p=r}
Let $p$ be a positive integer.  Then $R_{p,p}(k) = R_{p,1}(k+1)$, with both quantities equaling the number of distinct planar embeddings of $(p+1)$-valent trees with $k+1$ internal vertices.
\end{proposition}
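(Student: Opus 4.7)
The plan is to prove both the numerical identity and the tree interpretation simultaneously via a bijection between coral diagrams. By Theorem~\ref{thm: coral enumeration Raney numbers}, $R_{p,p}(k)$ equals the number of $(p,p)$-coral diagrams with $k$ $p$-stars, while $R_{p,1}(k+1)$ equals the number of $(p,1)$-coral diagrams with $k+1$ $p$-stars. Proposition~\ref{thm: raney interpretation general p r} specialized at $r=p$ already identifies the former with the number of planar embeddings of fully $(p+1)$-valent trees on $k+1$ internal vertices, so the substantive task is to exhibit a bijection between the two collections of coral diagrams.

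Before describing the map, I would record the following observation: in any $(p,1)$-coral diagram the base vertex is $2$-valent and hence carries exactly one valid attachment site. For $k \geq 0$, a $(p,1)$-coral diagram with $k+1 \geq 1$ total $p$-stars is therefore forced to begin with exactly one star on that site. Equivalently, in the notation of Proposition~\ref{thm: coral enumeration partitions}, the first entry of any ordered partition contributing to the count satisfies $\lambda_1 = 1$, since $\binom{1}{\lambda_1} = 0$ for $\lambda_1 > 1$. Consequently every such diagram contains a unique $2$-valent internal vertex, namely the base, sitting immediately below a distinguished first-tier $p$-star.

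Define a map $\Phi$ sending a $(p,p)$-coral diagram $D$ with $k$ stars to the tree obtained from $D$ by subdividing the leftmost terminal edge of its $(p+1)$-valent base with a new $2$-valent vertex $u$. In the resulting tree $u$ becomes the leftmost internal vertex, hence the base of a $(p,1)$-structure; the original base is recast as the first-tier $p$-star sitting atop $u$; and the remaining $k$ stars are untouched. The inverse $\Phi^{-1}$ takes a $(p,1)$-coral diagram with $k+1$ stars, locates the unique $2$-valent base guaranteed by the previous paragraph, and contracts the edge joining it to its non-terminal neighbor to restore a $(p+1)$-valent base of a $(p,p)$-diagram with $k$ stars. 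Because a $2$-valent vertex admits only one cyclic ordering of its two edges, both operations are well-defined and mutually inverse on the equivalence classes of planar embeddings.

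The step that warrants the most care is the planarity bookkeeping, since coral diagrams are taken up to homeomorphism fixing the linear ordering of terminal vertices. One must verify that $\Phi$ and $\Phi^{-1}$ preserve this boundary ordering (they do, since subdivision adds no terminal vertex and contraction removes none) and that they leave the cyclic orderings at all other vertices untouched. Once the bijection is checked, Theorem~\ref{thm: coral enumeration Raney numbers} immediately yields $R_{p,p}(k) = R_{p,1}(k+1)$, completing the proof.
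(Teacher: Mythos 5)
Your proposal is correct and takes essentially the same route as the paper: the paper's proof also subdivides the leftmost edge at the base of a $(p,p)$-coral diagram with a new $2$-valent vertex, re-roots there, and observes that the old base becomes the forced first-tier $p$-star of a $(p,1)$-coral diagram with $k+1$ stars. Your additional remarks on the forced $\lambda_1=1$ and the preservation of the boundary ordering are just a more explicit verification of the bijectivity the paper asserts.
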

\begin{proof}
	Consider a $(p,p)$-coral diagram with $k$ total $p$-stars.  We divide the leftmost edge emanating from the base vertex by adding an additional 2-valent vertex, and then isotope so that this new vertex lies at the base.  This is now a $(p,1)$-coral diagram with $k+1$ total $p$-stars.  Since there is only one attachment site for the first $p$-star, we are able to get every coral diagram of type $(p,1)$ with $k+1$ stars in this manner.  Also, it is clear that distinct $(p,p)$-coral diagrams with $k$ $p$-stars are transformed into distinct $(p,1)$-coral diagrams with $k+1$ $p$-stars.
\end{proof}

Also of interest is the situation where $p=1$, where coral diagrams provide a new ``geometric" proof of the result that $R_{1,r}(k)$ is related to ordered weak partitions of $k$:

\begin{proposition}
\label{thm: raney interpretation p=1}
Let $r$ be a positive integer.  Then $R_{1,r}(k)$ equals the number of ordered weak partitions of the positive integer $k$ into $r$ pieces.
\end{proposition}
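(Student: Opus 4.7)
The plan is to invoke Theorem \ref{thm: coral enumeration Raney numbers} to identify $R_{1,r}(k)$ with the number $T_{1,r}(k)$ of $(1,r)$-coral diagrams containing exactly $k$ total stars, and then set up a direct bijection between such diagrams and ordered weak partitions of $k$ into $r$ parts.

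First I would unpack what $p=1$ means geometrically. A $1$-star is simply a single edge connecting its base vertex to one terminal vertex, so attaching a $1$-star atop an existing terminal edge merely extends that edge into a two-edge chain. By iterating, any tree built from $1$-stars alone is a linear chain. Consequently, a $(1,r)$-coral diagram is obtained from the base $(r+1)$-star by stacking, independently, some chain of $1$-stars atop each of the $r$ attachment-eligible terminal edges (i.e., every terminal edge of the base other than the leftmost, which is forbidden by Definition \ref{def: coral diagram}).

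Next I would define the bijection. Number the $r$ eligible terminal edges of the base $(r+1)$-star from left to right by $1, 2, \ldots, r$, and send a $(1,r)$-coral diagram to the tuple $(i_1, i_2, \ldots, i_r)$ where $i_j \geq 0$ is the number of $1$-stars stacked above the $j$-th eligible edge. Since each eligible edge carries precisely one chain and the planar embedding is recorded entirely by the tuple of chain lengths (the linear order of eligible edges is intrinsic to the embedding), this map is well-defined and its inverse is obvious. The constraint that the coral diagram has exactly $k$ total $1$-stars translates to $i_1 + i_2 + \cdots + i_r = k$, so the image is precisely the set of ordered weak partitions of $k$ into $r$ parts.

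The argument presents essentially no obstacle; the only subtle point is noting that the planar ordering of the eligible base edges, together with the explicit exclusion of the leftmost edge, is exactly what forces the correspondence to produce \emph{ordered} tuples rather than multi-sets. Combining the bijection with Theorem \ref{thm: coral enumeration Raney numbers} then completes the proof.
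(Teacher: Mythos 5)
Your proposal is correct and follows essentially the same route as the paper: the paper likewise identifies $R_{1,r}(k)$ with $(1,r)$-coral diagrams via Theorem \ref{thm: coral enumeration Raney numbers}, observes that stacking $1$-stars produces chains over the $r$ eligible edges, and reads off an ordered weak partition, with the leftmost unused edge guaranteeing the ordering. Your write-up just spells out the bijection more explicitly than the paper's brief argument.
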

\begin{proof}
$(1,r)$-coral diagrams take the form shown in Figure \ref{fig: (1,r) coral diagram}.  Adding $k$ total 1-stars then amounts to choosing a partition of $k$ into $r$ pieces with $\lambda_j \geq 0$ 1-stars each.  This partition is ordered because of the unused edge at the left.
\end{proof}

\begin{figure}[h!]
\centering
\begin{tikzpicture}
	[scale=.45,auto=left,every node/.style={circle,fill=black,inner sep=1.7pt}]
	\node (base) at (2.75,0) {};
	\node (a) at (0,6.0) {};
	\node (b1) at (2,1.5) {};
	\node (b2) at (2,2.75) {};
	\node[fill=white] (b3) at (2,3.65) {\scalebox{.85}{$\vdots$}};
	\node (b4) at (2,4.25) {};
	\node (b5) at (2,5.5) {};
	\node (c1) at (3.5,1.5) {};
	\node (c2) at (3.5,2.75) {};
	\node[fill=white] (c3) at (3.5,3.65) {\scalebox{.85}{$\vdots$}};
	\node (c4) at (3.5,4.25) {};
	\node (c5) at (3.5,5.5) {};
	\node (d1) at (5,1.5) {};
	\node (d2) at (5,2.75) {};
	\node[fill=white] (d3) at (5,3.65) {\scalebox{.85}{$\vdots$}};
	\node (d4) at (5,4.25) {};
	\node (d5) at (5,5.5) {};
	\draw[thick,bend left=30] (base) to (a);
	\draw[thick,bend left=5] (base) to (b1);
	\draw[thick] (b1) to (b2);
	\draw[thick] (b4) to (b5);
	\draw[thick,bend right=10] (base) to (c1);
	\draw[thick] (c1) to (c2);
	\draw[thick] (c4) to (c5);
	\draw[thick,bend right=25] (base) to (d1);
	\draw[thick] (d1) to (d2);
	\draw[thick] (d4) to (d5);
\end{tikzpicture}
\caption{A (1,3)-coral diagram, giving an ordered partition in 3 pieces}
\label{fig: (1,r) coral diagram}
\end{figure}
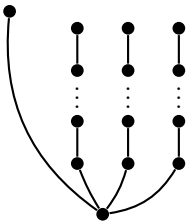

Further interpretations of the Raney numbers can be made when one orients the edges of the planar embeddings.  Here we only consider orientations that are coherent in the sense that every vertex in either a source or a sink.

\begin{proposition}
\label{thm: raney interpretation p=r2}
Let $p$ be a positive integer, then $R_{p^2,p}(k)$ equals the number of distinct planar embeddings of $(p+1)$-valent trees, coherently oriented according to the source-sink property,  with $k(p^2-1) + (p+1)$ 1-valent boundary vertices that are all sinks.
\end{proposition}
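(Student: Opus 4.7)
The plan is to apply Theorem \ref{thm: coral enumeration Raney numbers} to replace $R_{p^2,p}(k)$ by the number of $(p^2,p)$-coral diagrams having $k$ total $p^2$-stars, and then exhibit an explicit bijection between those coral diagrams and the coherently oriented $(p+1)$-valent trees described in the proposition.

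Before constructing the bijection, I would carry out a brief source-sink bookkeeping computation. Every edge of a coherently oriented tree runs from a source to a sink, so in a $(p+1)$-valent tree with $s$ internal sources, $t$ internal sinks, and $B$ boundary sinks, the edge-sum equations together with the tree identity $E = V - 1$ force $s = pt + 1$ and $B = (p^2-1)t + (p+1)$. Hence the hypothesis $B = k(p^2-1) + (p+1)$ uniquely determines $t = k$ and $s = pk + 1$, matching the totals produced by the bijection below.

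The bijection itself proceeds as follows. Given a $(p^2,p)$-coral diagram, declare the $(p+1)$-valent base vertex to be a source, and replace each $(p^2+1)$-valent attachment site (with one edge coming in from below and $p^2$ edges emanating above) by a local subtree consisting of a single sink vertex sitting beneath $p$ new source vertices; the $p^2$ upward edges of the original attachment site are partitioned into $p$ consecutive planar batches of $p$ edges each, with the $i$-th batch becoming the $p$ upward edges of the $i$-th new source. The resulting embedding is $(p+1)$-valent throughout its interior, is coherently source-sink oriented from the base outward, and retains $k(p^2-1)+(p+1)$ boundary sinks. The inverse operation contracts each internal sink together with the $p$ source vertices immediately above it back into a single $(p^2+1)$-valent vertex; because internal sinks are forced to have exactly one downward neighbor (a source) and $p$ upward neighbors (all sources), this contraction is always well-defined.

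The main obstacle will be reconciling the planar-embedding conventions on the two sides. One must verify that the canonical representative of each source-sink tree equivalence class, namely the one whose leftmost boundary sink attaches via a single edge to a distinguished ``bottom'' source vertex, corresponds exactly to the coral-diagram convention of never placing a $p^2$-star atop the leftmost initial edge of the base. Once this alignment of canonical representatives is verified, expansion and contraction give mutually inverse bijections on planar embedding classes, and the proposition follows by combining this bijection with Theorem \ref{thm: coral enumeration Raney numbers}.
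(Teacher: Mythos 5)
Your proof is correct and follows essentially the same route as the paper: both identify the qualifying oriented trees with $(p^2,p)$-coral diagrams built from ``modified'' $p^2$-stars (your sink-beneath-$p$-sources expansion is exactly the local picture in Figure \ref{fig: oriented p2 stars}) and then invoke Theorem \ref{thm: coral enumeration Raney numbers}. The only difference is packaging: the paper establishes the forced star structure by induction on the number of internal vertices, whereas you obtain it from the local structure of internal sinks together with a source/sink edge count that the paper states separately as a remark after the proof.
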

\begin{proof}
We look to establish a bijection between planar embeddings that satisfy the hypothesis of the proposition and $(p^2,p)$-coral diagrams whose ``stars" take the modified form shown in Figure \ref{fig: oriented p2 stars}.  We induct on the number of internal vertices in the tree:

For the base step, notice that a $(p+1)$-valent tree with one internal vertex and $(p+1)$ external sinks is merely the base of a $(p^2,p)$-coral diagram where the base vertex is a source.  For the inductive step, notice that adding $p$ edges to an external sink produces $p$ new external vertices that are all sources.  If the resulting tree is to satisfy the hypotheses of the proposition, this necessitates the addition of $p$ additional edges to each of these new vertices.  This yields $p^2$ new external sinks, and shows that any qualifying tree must be built up via the attachment of entire $p^2$-stars of the type shown in Figure \ref{fig: oriented p2 stars}.  It follows that every planar embedding created in this way is homeomorphic to a unique $(p^2,p)$-coral diagram.
\end{proof}

Figure \ref{fig: oriented p2 stars} shows our modified $p^2$-stars for source-sink directed trees in the cases of $p=2$ and $p=3$.  Notice that the top of those stars exhibit a constant upward orientation, and that the base of those stars can only be attached to an upward oriented edge.

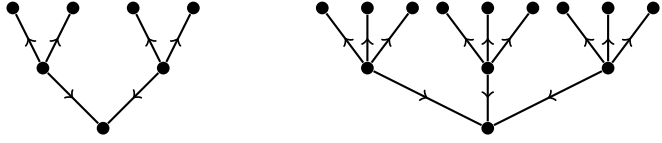
\begin{figure}[h!]
\centering
\begin{tikzpicture}
	[scale=.8,auto=left,every node/.style={circle,fill=black,inner sep=1.7pt}]
	\node (n1) at (0,0) {};
	\node (n2) at (-1,1) {};
	\node (n3) at (1,1) {};
	\node (n4) at (-1.5,2) {};
	\node (n5) at (-.5,2) {};
	\node (n6) at (.5,2) {};
	\node (n7) at (1.5,2) {};
	\draw[->-, thick] (n2) to (n1);
	\draw[->-, thick] (n3) to (n1);
	\draw[->-, thick] (n2) to (n4);
	\draw[->-, thick] (n2) to (n5);
	\draw[->-, thick] (n3) to (n6);
	\draw[->-, thick] (n3) to (n7);
\end{tikzpicture}
\hspace{.5in}
\begin{tikzpicture}
	[scale=.8,auto=left,every node/.style={circle,fill=black,inner sep=1.7pt}]
	\node (n1) at (0,0) {};
	\node (n2) at (-2,1) {};
	\node (n3) at (0,1) {};
	\node (n4) at (2,1) {};
	\node (n5) at (-2.75,2) {};
	\node (n6) at (-2,2) {};
	\node (n7) at (-1.25,2) {};
	\node (n8) at (-.75,2) {};
	\node (n9) at (0,2) {};
	\node (n10) at (.75,2) {};
	\node (n11) at (1.25,2) {};
	\node (n12) at (2,2) {};
	\node (n13) at (2.75,2) {};
	\draw[->-, thick] (n2) to (n1);
	\draw[->-, thick] (n3) to (n1);
	\draw[->-, thick] (n4) to (n1);
	\draw[->-, thick] (n2) to (n5);
	\draw[->-, thick] (n2) to (n6);
	\draw[->-, thick] (n2) to (n7);
	\draw[->-, thick] (n3) to (n8);
	\draw[->-, thick] (n3) to (n9);
	\draw[->-, thick] (n3) to (n10);
	\draw[->-, thick] (n4) to (n11);
	\draw[->-, thick] (n4) to (n12);
	\draw[->-, thick] (n4) to (n13);
\end{tikzpicture}
\caption{Oriented ``stars" for $(2^2,2)$- and $(3^2,3)$-coral diagrams}
\label{fig: oriented p2 stars}
\end{figure}

A quick inductive argument shows that the only $(p+1)$-valent trees with the properties required by Proposition \ref{thm: raney interpretation p=r2} have $k(p^2-1)+(p+1)$ boundary points.  Thus, ranging over $R_{p^2,p}(k)$ for all $k \geq 0$ accounts for all source-sink oriented (p+1)-valent trees with constant boundary vertex orientation.

An equivalent interpretation to the one in Proposition \ref{thm: raney interpretation p=r2} is that $R_{p^2,p}(k)$ counts the number of distinct planar embeddings of $(p+1)$-valent trees with $k(p^2-1) + (p+1)$ 1-valent boundary points such that, for any fixed vertex, any path from that vertex to the boundary passes through an equivalent number of edges modulo two.

The primary reason we present the specific interpretation of Proposition \ref{thm: raney interpretation p=r2} is that the $(p,r)=(4,2)$ case gives planar embeddings that qualify as (non-elliptic) $A_2$ webs (referred to by some as simply $sl_3$ webs).  $A_2$ webs constitute the morphisms in the braided monoidal category introduced by Kuperberg \cite{Kup} to diagrammatically present the representation theory of the quantum enveloping algebra $U_q(sl_3)$.  Objects in this category are finite words in the alphabet $\lbrace +,-\rbrace$, corresponding to the two (dual) irreducible three-dimensional $sl_3$-modules $V^+$ and $V^-$.  These words are encoded via the orientation of the boundary vertices, so that all webs can be represented as elements of $Hom(\vec{s},\emptyset)$ for boundary word $\vec{s}$.  Non-elliptic webs are those webs that lack internal square and bigons.  Non-elliptic webs form a linear basis for all $A_2$ webs over $\Z[q,q^{-1}]$.  For a constant boundary string of $3k$ total +'s, the total number of non-elliptic webs is known to be in bijection with standard Young tableaux of size $3 \times k$ (\cite{PPS},\cite{Tym}).  Our $(4,2)$-coral diagrams with $k$ total $p$-stars then form a subset of non-elliptic webs with a constant boundary string of $k(4-1) + (2+1) = 3(k+1)$ total +'s, and are in bijection with an interesting subset of standard Young tableaux of size $3 \times (k+1)$.

In terms of $A_2$ web terminology, Proposition \ref{thm: raney interpretation p=r2} can be restated in the specific case of $p=2$ as follows:

\begin{corollary}
\label{thm: sl3 webs, constant string}
$R_{4,2}(k)$ equals the number of connected (non-elliptic) $A_2$ webs that lack an internal face and have a constant boundary string with $3(k+1)$ pluses.
\end{corollary}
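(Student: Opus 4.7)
The plan is to deduce the corollary as the $p=2$ specialization of Proposition \ref{thm: raney interpretation p=r2}, the only remaining work being to translate between the tree-theoretic language of that proposition and the diagrammatic language of $A_2$ webs. First I would specialize Proposition \ref{thm: raney interpretation p=r2} to $p=2$: this tells us that $R_{4,2}(k)$ counts planar embeddings of $3$-valent trees, equipped with a coherent source/sink orientation on every internal vertex, whose $k(4-1) + (2+1) = 3(k+1)$ boundary vertices are all sinks.

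Next I would set up the dictionary with $A_2$ webs. By definition, a (non-elliptic) $A_2$ web is a planar trivalent graph, oriented so that every internal vertex is a source or a sink, with boundary labels $\pm$ recording the direction of the incident edge; a constant boundary string of $3(k+1)$ pluses corresponds precisely to $3(k+1)$ boundary vertices at which the sole adjacent edge is oriented inward, i.e.\ every boundary vertex is a sink. Thus the trees enumerated by Proposition \ref{thm: raney interpretation p=r2} in the $p=2$ case carry exactly the combinatorial data of an $A_2$ web with the required boundary.

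Then I would argue the two remaining properties. Since every tree is connected, the webs we obtain are connected. For the ``no internal face'' condition, I would use the basic planar-graph fact that a connected planar graph with no bounded face is a tree; conversely, any tree has no bounded face. Consequently, the condition ``connected and lacking an internal face'' cuts out exactly the planar trees among all $A_2$ webs, and any such tree is automatically non-elliptic because it contains no cycle at all (hence no internal bigon or square). This establishes a bijection between the trees counted by Proposition \ref{thm: raney interpretation p=r2} and the $A_2$ webs described in the corollary, yielding the equality $R_{4,2}(k) = \#\{\text{such webs}\}$.

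The only genuine subtlety, and the step I would scrutinize most carefully, is the orientation/boundary dictionary: one must verify that the convention used in Proposition \ref{thm: raney interpretation p=r2} (where the leftmost boundary vertex is treated specially as the root) is compatible with the $A_2$ web convention of treating boundary points as a linearly ordered cyclic word in $\{+,-\}$, and in particular that the ``leftmost edge of the base star'' in the coral-diagram construction does not artificially break the desired symmetry of a constant $+$ boundary. Once one observes that in the unoriented planar embedding the choice of leftmost boundary vertex is merely a rooting convention that makes each homeomorphism class of embeddings appear exactly once, and that the source/sink property forces this root edge to carry the same orientation as every other boundary edge, the bijection is clean and the corollary follows immediately.
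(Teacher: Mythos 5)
Your proposal is correct and follows essentially the same route as the paper, which offers no separate proof and simply presents the corollary as the $p=2$ restatement of Proposition \ref{thm: raney interpretation p=r2} in $A_2$ web language. The details you supply --- that ``connected with no internal face'' characterizes trees among planar webs, that trees are vacuously non-elliptic, and that a constant $+$ boundary string is the same as a uniform orientation at the boundary vertices --- are exactly the translations the paper leaves implicit in the paragraph preceding the corollary.
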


In an upcoming paper \cite{BD}, the authors apply Corollary \ref{thm: sl3 webs, constant string} and other combinatorial results to enumerate webs with distinct geometric structures.  In Figure \ref{fig: sl3 webs} we show how Corollary \ref{thm: sl3 webs, constant string} applies to $A_2$ webs with boundary word $(++++++)$.  Here we have 5 total non-elliptic webs, $R_{4,2}(1)=2$ of which are connected trees.

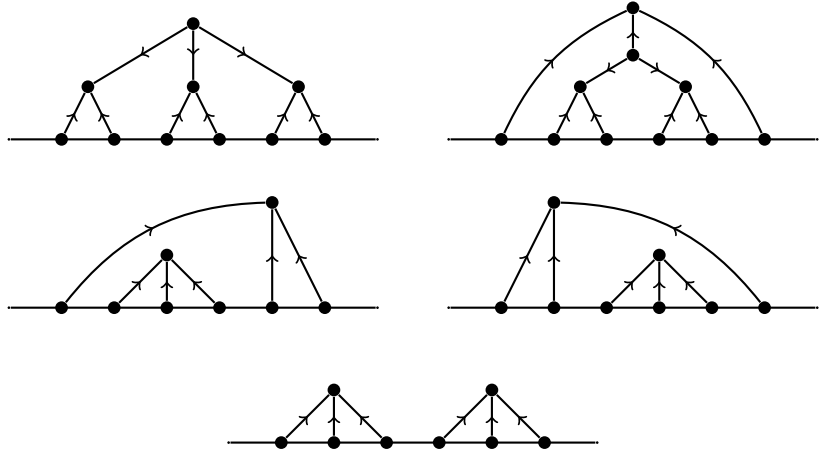
\begin{figure}[h!]

\begin{tikzpicture}
	[scale=.7,auto=left,every node/.style={circle,fill=black,inner sep=1.7pt}]
	\node[inner sep=0pt] (left) at (0,0) {};
	\node (+1) at (1,0) {};
	\node (+2) at (2,0) {};
	\node (+3) at (3,0) {};
	\node (+4) at (4,0) {};
	\node (+5) at (5,0) {};
	\node (+6) at (6,0) {};
	\node[inner sep=0pt] (right) at (7,0) {};
	\draw[thick] (left) to (right);

	\node (a) at (1.5,1) {};
	\node (b) at (3.5,1) {};
	\node (c) at (5.5,1) {};
	\node (d) at (3.5,2.2) {};
	\draw[->-, thick] (+1) to (a);
	\draw[->-, thick] (+2) to (a);
	\draw[->-, thick] (+3) to (b);
	\draw[->-, thick] (+4) to (b);
	\draw[->-, thick] (+5) to (c);
	\draw[->-, thick] (+6) to (c);	
	\draw[->-, thick] (d) to (a);	
	\draw[->-, thick] (d) to (b);	
	\draw[->-, thick] (d) to (c);		
\end{tikzpicture}
\hspace{.25in}
\begin{tikzpicture}
	[scale=.7,auto=left,every node/.style={circle,fill=black,inner sep=1.7pt}]
	\node[inner sep=0pt] (left) at (0,0) {};
	\node (+1) at (1,0) {};
	\node (+2) at (2,0) {};
	\node (+3) at (3,0) {};
	\node (+4) at (4,0) {};
	\node (+5) at (5,0) {};
	\node (+6) at (6,0) {};
	\node[inner sep=0pt] (right) at (7,0) {};
	\draw[thick] (left) to (right);

	\node (a) at (2.5,1) {};
	\node (b) at (4.5,1) {};
	\node (c) at (3.5,2.5) {};
	\node (d) at (3.5,1.6) {};
	\draw[->-, thick,bend left=20] (+1) to (c);
	\draw[->-, thick] (+2) to (a);
	\draw[->-, thick] (+3) to (a);
	\draw[->-, thick] (+4) to (b);
	\draw[->-, thick] (+5) to (b);
	\draw[->-, thick,bend right=20] (+6) to (c);
	\draw[->-, thick] (d) to (a);
	\draw[->-, thick] (d) to (b);
	\draw[->-, thick] (d) to (c);
\end{tikzpicture}

\vspace{.25in}

\begin{tikzpicture}
	[scale=.7,auto=left,every node/.style={circle,fill=black,inner sep=1.7pt}]
	\node[inner sep=0pt] (left) at (0,0) {};
	\node (+1) at (1,0) {};
	\node (+2) at (2,0) {};
	\node (+3) at (3,0) {};
	\node (+4) at (4,0) {};
	\node (+5) at (5,0) {};
	\node (+6) at (6,0) {};
	\node[inner sep=0pt] (right) at (7,0) {};
	\draw[thick] (left) to (right);

	\node (a) at (3,1) {};
	\node (b) at (5,2) {};
	\draw[->-, thick,bend left=25] (+1) to (b);
	\draw[->-, thick] (+2) to (a);
	\draw[->-, thick] (+3) to (a);
	\draw[->-, thick] (+4) to (a);
	\draw[->-, thick] (+5) to (b);
	\draw[->-, thick] (+6) to (b);	
\end{tikzpicture}
\hspace{.25in}
\begin{tikzpicture}
	[scale=.7,auto=left,every node/.style={circle,fill=black,inner sep=1.7pt}]
	\node[inner sep=0pt] (left) at (0,0) {};
	\node (+1) at (1,0) {};
	\node (+2) at (2,0) {};
	\node (+3) at (3,0) {};
	\node (+4) at (4,0) {};
	\node (+5) at (5,0) {};
	\node (+6) at (6,0) {};
	\node[inner sep=0pt] (right) at (7,0) {};
	\draw[thick] (left) to (right);

	\node (a) at (4,1) {};
	\node (b) at (2,2) {};
	\draw[->-, thick] (+1) to (b);
	\draw[->-, thick] (+2) to (b);
	\draw[->-, thick] (+3) to (a);
	\draw[->-, thick] (+4) to (a);
	\draw[->-, thick] (+5) to (a);
	\draw[->-, thick,bend right=25] (+6) to (b);	
\end{tikzpicture}

\vspace{.35in}

\begin{tikzpicture}
	[scale=.7,auto=left,every node/.style={circle,fill=black,inner sep=1.7pt}]
	\node[inner sep=0pt] (left) at (0,0) {};
	\node (+1) at (1,0) {};
	\node (+2) at (2,0) {};
	\node (+3) at (3,0) {};
	\node (+4) at (4,0) {};
	\node (+5) at (5,0) {};
	\node (+6) at (6,0) {};
	\node[inner sep=0pt] (right) at (7,0) {};
	\draw[thick] (left) to (right);

	\node (a) at (2,1) {};
	\node (b) at (5,1) {};
	\draw[->-, thick] (+1) to (a);
	\draw[->-, thick] (+2) to (a);
	\draw[->-, thick] (+3) to (a);
	\draw[->-, thick] (+4) to (b);
	\draw[->-, thick] (+5) to (b);
	\draw[->-, thick] (+6) to (b);	
\end{tikzpicture}

\caption{The 5 non-elliptic $sl_3$ webs with boundary $(++++++)$}
\label{fig: sl3 webs}
\end{figure}

Raney numbers can also be applied to enumerate (non-elliptic) $A_2$ webs with boundary word of the form $(-+++\hdots +)$.  The construction here is similar to that in Proposition \ref{thm: raney interpretation p=r2}, with the same form of ``modified" 4-stars.  The only difference is that we now give the leftmost unused edge the opposite orientation and directly wrap it around to form a single attachment site for our 4-stars (as opposed to adding a single trivalent vertex in the base that flips the orientation of attachment sites).  This directly proves the following:

\begin{proposition}
\label{thm: sl3 webs, non-constant string}
$R_{4,1}(k)$ equals the number of connected (non-elliptic) $A_2$ webs that lack an internal face and have a constant boundary string with one minus followed by $3k+1$ pluses.
\end{proposition}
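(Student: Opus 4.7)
The plan is to mimic the bijective argument from Proposition \ref{thm: raney interpretation p=r2} in the case $p=2$, modifying only the base of the coral diagram to account for $r=1$. I would establish a bijection between $(4,1)$-coral diagrams built from the modified 4-stars of Figure \ref{fig: oriented p2 stars} (left) and the connected non-elliptic $A_2$ webs described in the statement, then invoke Theorem \ref{thm: coral enumeration Raney numbers} to conclude that this count equals $R_{4,1}(k)$.

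First I would handle the modified base. Since $r=1$, the base $(r+1)$-star is a 2-valent vertex with one leftmost edge and one attachment site. Following the paragraph preceding the statement, I would orient the single attachment site upward (producing a $+$ boundary vertex) and give the leftmost edge the opposite orientation. Because the base vertex is then 2-valent with two oppositely oriented adjacent edges, it is homeomorphic to a plain arc; this arc wraps around to run from a $-$ boundary vertex on the far left to a $+$ boundary vertex at the initial attachment site. This handles the $k=0$ case (the single non-elliptic web with boundary $(-+)$) and, crucially, supplies the unique attachment site on which all further 4-stars will be stacked.

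The inductive step then proceeds exactly as in Proposition \ref{thm: raney interpretation p=r2}: attaching a 4-star of the form in Figure \ref{fig: oriented p2 stars} atop an upward-oriented edge contributes three new internal vertices that respect the source/sink property, turns one upward boundary edge into four new upward boundary edges, and introduces only hexagonal (or larger) internal faces. After $k$ such attachments the boundary word is one $-$ followed by $3k+1$ pluses, matching the hypothesis. The step I expect to be the main obstacle is the converse direction: showing that every connected non-elliptic $A_2$ web with this boundary and no internal face actually arises from the construction. I would argue this by induction on the number of internal vertices, peeling off a topmost 4-star whose four upper vertices all lie on the boundary, and checking that the boundary strand immediately to the right of the $-$ must be the wrap-around of the base so that the peeling terminates correctly. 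The key technical point is that the wrap-around leftmost strand, being the unique boundary arc that carries a downward orientation, selects a canonical root in each planar equivalence class and forbids two distinct $(4,1)$-coral diagrams from yielding homeomorphic embeddings, mirroring the role played by the leftmost unused edge throughout Section \ref{sec: primary results}.
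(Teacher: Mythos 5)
Your proposal is correct and follows essentially the same route as the paper, whose entire argument for this proposition is the preceding paragraph: reuse the modified $4$-stars of Proposition \ref{thm: raney interpretation p=r2}, reverse the orientation of the leftmost unused edge, and wrap it around so the $2$-valent base collapses to an arc providing the single attachment site; you have simply supplied the inductive details and the converse ``peeling'' step that the paper leaves implicit. (One cosmetic slip: a connected tree web has no internal faces at all, so the remark about ``hexagonal or larger internal faces'' is vacuous rather than needed.)
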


An application of Proposition \ref{thm: sl3 webs, non-constant string} in the case of $k=1$ is shown in Figure \ref{fig: sl3 webs non-constant}.  Here we have 3 total non-elliptic webs with boundary word $(-++++)$, $R_{4,1}(1)=1$ of which is a connected tree.  Notice that, since $R_{4,1}(k)$ is simply the k\textsuperscript{th} entry in the 4-Catalan sequence, this result also gives a new interpretation of the 4-Catalan numbers.

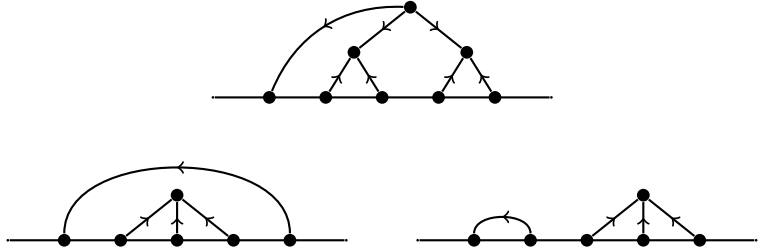
\begin{figure}[h!]

\begin{tikzpicture}
	[scale=.75,auto=left,every node/.style={circle,fill=black,inner sep=1.7pt}]
	\node[inner sep=0pt] (left) at (0,0) {};
	\node (-1) at (1,0) {};
	\node (+2) at (2,0) {};
	\node (+3) at (3,0) {};
	\node (+4) at (4,0) {};
	\node (+5) at (5,0) {};
	\node[inner sep=0pt] (right) at (6,0) {};
	\draw[thick] (left) to (right);

	\node (n1) at (2.5,.8) {};
	\node (n2) at (4.5,.8) {};
	\node (n3) at (3.5,1.6) {};
	\draw[->-, thick] (+2) to (n1);
	\draw[->-, thick] (+3) to (n1);
	\draw[->-, thick] (+4) to (n2);
	\draw[->-, thick] (+5) to (n2);
	\draw[->-, thick] (n3) to (n1);
	\draw[->-, thick] (n3) to (n2);
	\draw[->-, thick, bend right=35] (n3) to (-1);

\end{tikzpicture}

\vspace{.2in}

\begin{tikzpicture}
	[scale=.75,auto=left,every node/.style={circle,fill=black,inner sep=1.7pt}]
	\node[inner sep=0pt] (left) at (0,0) {};
	\node (-1) at (1,0) {};
	\node (+2) at (2,0) {};
	\node (+3) at (3,0) {};
	\node (+4) at (4,0) {};
	\node (+5) at (5,0) {};
	\node[inner sep=0pt] (right) at (6,0) {};
	\draw[thick] (left) to (right);

	\node (n1) at (3,.8) {};
	\draw[->-, thick] (+2) to (n1);
	\draw[->-, thick] (+3) to (n1);
	\draw[->-, thick] (+4) to (n1);
	\draw[->-, thick, bend right= 90] (+5) to (-1) {};

\end{tikzpicture}
\hspace{.25in}
\begin{tikzpicture}
	[scale=.75,auto=left,every node/.style={circle,fill=black,inner sep=1.7pt}]
	\node[inner sep=0pt] (left) at (0,0) {};
	\node (-1) at (1,0) {};
	\node (+2) at (2,0) {};
	\node (+3) at (3,0) {};
	\node (+4) at (4,0) {};
	\node (+5) at (5,0) {};
	\node[inner sep=0pt] (right) at (6,0) {};
	\draw[thick] (left) to (right);

	\node (n1) at (4,.8) {};
	\draw[->-, thick] (+3) to (n1);
	\draw[->-, thick] (+4) to (n1);
	\draw[->-, thick] (+5) to (n1);	
	\draw[->-, thick, bend right= 90] (+2) to (-1) {};
\end{tikzpicture}

\caption{The 3 non-elliptic $sl_3$ webs with boundary $(-++++)$}
\label{fig: sl3 webs non-constant}
\end{figure}


We close this paper by commenting upon possible interpretations of $R_{p,r}$ for other choices of $p,r$.  We focus on how our results about $A_2$ webs in Corollary \ref{thm: sl3 webs, constant string} and Proposition \ref{thm: sl3 webs, non-constant string} may be generalized to enumerate connected trees in the wider class of $sl_n$ webs.

Much as $A_2$ webs are used to diagrammatically present the representation theory of $U_q(sl_3)$, $sl_n$ webs are used to diagrammatically present the representation theory of $U_q(sl_n)$.  In addition to carrying an orientation, edges in these $sl_n$ webs are now labelled by one of the $(n-1)$ fundamental representations, while reversal of orientation corresponds to taking the dual of the given representation (in $sl_3$ webs there are 2 fundamental representations that are duals of one another, so the additional edge labellings are dropped because the orientations carry all necessary information).  The vertices of these webs must obey a more complicated set of conditions that depend on both orientation and edge-labelling, giving a far more complicated theory.  See \cite{Mor} for a detailed introduction to this topic, \cite{Fon} and \cite{West} for constructive algorithms producing $sl_n$ web bases, and the more recent \cite{CKM} for a generating set of relations for $sl_n$ webs.

The following conjecture is a direct generalization of Corollary \ref{thm: sl3 webs, constant string} to the enumeration of $sl_n$ ``tree webs" with a constant boundary string.

\begin{conjecture}
\label{thm: sln webs, constant}
For any $n \geq 3$, $ (n-2)^k R_{n+1,n-1}(k)$ equals the number of connected $sl_n$ webs that lack an internal cycle and have a boundary string with $n(k+1)$ total 1's.
\end{conjecture}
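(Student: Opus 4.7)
The plan is to extend the coral diagram bijection used for Corollary \ref{thm: sl3 webs, constant string} to the $sl_n$ setting, with the $(n-2)^k$ prefactor absorbing the additional representation-theoretic freedom present in $sl_n$ webs for $n \geq 4$ but absent in the $sl_3$ case (where $n-2 = 1$). Specifically, I would build a bijection between the connected $sl_n$ tree webs enumerated by the conjecture and pairs $(D, \ell)$ consisting of an $(n+1, n-1)$-coral diagram $D$ with $k$ total $(n+1)$-stars together with a labelling $\ell$ of those $k$ stars by elements of a fixed $(n-2)$-element set. Theorem \ref{thm: coral enumeration Raney numbers} would then immediately yield the count $(n-2)^k R_{n+1, n-1}(k)$.

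First I would handle the base case $k=0$: the only connected $sl_n$ tree web with exactly $n$ boundary 1-edges should be the determinant morphism $\Lambda^n V_1 \to \C$, and its planar realisations as trees of trivalent web vertices all coincide modulo the Cautis--Kamnitzer--Morrison relations of \cite{CKM}, contributing $1 = (n-2)^0 R_{n+1,n-1}(0)$. Next I would identify the elementary building block: a tree fragment with one input 1-edge and $(n+1)$ output 1-edges whose internal edges carry labels in $\{1, 2, \ldots, n-1\}$ consistent with the $sl_n$ web rules. A case analysis on the possible label sequences along the path from the input to a distinguished internal edge, together with the web relations, should show that there are exactly $n-2$ distinct such fragments (indexed, for instance, by the label of the distinguished internal edge that measures how the $(n+1)$ output edges split across the fragment). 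Attaching these blocks one at a time to terminal 1-edges not adjacent to the leftmost boundary, exactly as in the proof of Proposition \ref{thm: raney interpretation p=r2}, would give the bijection: the underlying planar combinatorics forms an $(n+1, n-1)$-coral diagram, while the $(n-2)$ local choices at each attached block produce the prefactor.

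The main obstacle will be the classification of the elementary building block. For $n=3$ the factor $n-2 = 1$ means there is nothing beyond what is already carried out in Proposition \ref{thm: raney interpretation p=r2}, but for $n \geq 4$ one must argue both that the fragments listed above are linearly independent in the $sl_n$ web basis and that every tree fragment meeting the boundary conditions reduces, via the CKM relations, to exactly one fragment in the list without introducing an internal cycle. Handling these relations cleanly, and verifying that the global decomposition of a tree web into a base piece plus $k$ elementary blocks is truly unique (so that distinct pairs $(D, \ell)$ yield distinct webs), will require a careful induction tracking edge labels, orientations, and cycle structure simultaneously. Once that classification is in place, the enumeration follows immediately from Theorem \ref{thm: coral enumeration Raney numbers} combined with the $(n-2)^k$ multiplicative factor from the local labellings.
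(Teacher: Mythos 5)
The statement you are asked to prove is a \emph{conjecture}: the paper offers no proof of it, and the authors devote several paragraphs immediately after its statement to explaining precisely why they cannot prove it. Your proposal reproduces exactly the strategy the authors themselves sketch --- realize the tree webs as $(n+1,n-1)$-coral diagrams with one valid base and $n-2$ inequivalent choices for each $(n+1)$-star, then invoke Theorem \ref{thm: coral enumeration Raney numbers} to get the factor $R_{n+1,n-1}(k)$ and the labellings to get $(n-2)^k$ --- but it does not close either of the two gaps that the authors identify as the reason the statement remains conjectural. You name these gaps yourself as ``the main obstacle'' and defer them to ``a careful induction,'' which is a plan for a proof rather than a proof.

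Concretely, the missing steps are: (a) a classification showing that there are exactly $n-2$ pairwise non-equivalent $(n+1)$-star fragments and exactly one valid base. The authors indicate that the $I=H$ relation (Relation 2.6 of \cite{CKM}, Kim's relation for $sl_4$) should produce the $n-2$ local choices, but this is only verified for $n=4$ and is not established in general. (b) More seriously, a global independence statement: one must rule out that two distinct tree webs assembled from different pairs $(D,\ell)$ are nevertheless equal in the web category via a chain of relations that passes through non-tree (or non-basis) webs. This cannot be settled by the local, fragment-by-fragment analysis you propose, because for $n>3$ there is no tractable analogue of the $sl_3$ non-elliptic criterion for recognizing basis webs, and the constructive bases of \cite{West} and \cite{Fon} do not identify which of their outputs are connected trees. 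Until both (a) and (b) are supplied, the bijection with labelled coral diagrams is not established and the count $(n-2)^k R_{n+1,n-1}(k)$ does not follow.
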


The specific interpretation of $(n+1,n-1)$-coral diagrams that motivates Conjecture \ref{thm: sln webs, constant} is shown in Figure \ref{fig: sl4 webs} for the relatively easy case of $n=4$.  On the left we show the base for our $(5,3)$-coral diagrams, here interpreted as part of an $sl_4$ web, and on the right we show two non-equivalent pieces of $sl_4$ webs that both function as valid choices for each $5$-star.  In the $sl_4$ case, the reason that there aren't additional non-equivalent variations of these pieces follows from Kim's relations for $sl_4$ webs \cite{Kim}.  The relevant member of these relations is shown in Figure \ref{fig: sl4 web relation}; notice that this relation also sees a direct analogue in the more general $sl_n$ web relations of Cautis, Kamnitzer, and Morrison (Relation 2.6 of \cite{CKM}).  In the general $sl_n$ case, we conjecture the existence of one valid base and $n-2$ valid choices for each star.  It is these $n-2$ non-equivalent choices for each of the $k$ total $(n+1)$-stars in our coral diagrams that leads to the unusual ``correction factor" of $(n-2)^k$ in Conjecture \ref{thm: sln webs, constant}.

\begin{figure}[h!]
\begin{tikzpicture}
	[scale=.8,auto=left,every node/.style={circle,fill=black,inner sep=1.7pt}]
	\node (n1) at (0,0) {};
	\node (n2) at (-1.5,3.5) {};
	\node (n3) at (0,1) {};
	\node (n4) at (1.5,2) {};
	\node (n5) at (-.5,2) {};
	\node (n6) at (.5,2) {};
	\draw[->-, thick, bend left=25] (n1) to (n2);
	\draw[double, thick] (n1) to (n3);
	\draw[->-, thick, bend right=15] (n1) to (n4);
	\draw[->-, thick] (n3) to (n5);
	\draw[->-, thick] (n3) to (n6);
\end{tikzpicture}
\hspace{1in}
\begin{tikzpicture}
	[scale=.8,auto=left,every node/.style={circle,fill=black,inner sep=1.7pt}]
	\node (n1) at (0,0) {};
	\node (n2) at (-1,1) {};
	\node (n3) at (1,1) {};
	\node (n4) at (-1.5,2) {};
	\node (n5) at (-0.5,2) {};
	\node (n6) at (0.5,2) {};
	\node (n7) at (1.5,2) {};
	\node (n8) at (0,3) {};
	\node (n9) at (1,3) {};
	\draw[thick,double] (n1) to (n2);
	\draw[->-, thick] (n3) to (n1);
	\draw[->-, thick] (n2) to (n4);
	\draw[->-, thick] (n2) to (n5);
	\draw[thick,double] (n3) to (n6);
	\draw[->-, thick] (n3) to (n7);
	\draw[->-, thick] (n6) to (n8);
	\draw[->-, thick] (n6) to (n9);
\end{tikzpicture}
\hspace{.15in}
\raisebox{36pt}{\scalebox{1.5}{$\ncong$}}
\hspace{.05in}
\begin{tikzpicture}
	[scale=.8,auto=left,every node/.style={circle,fill=black,inner sep=1.7pt}]
	\node (n1) at (0,0) {};
	\node (n2) at (-1,1) {};
	\node (n3) at (1,1) {};
	\node (n4) at (-1.5,2) {};
	\node (n5) at (-0.5,2) {};
	\node (n6) at (0.5,2) {};
	\node (n7) at (1.5,2) {};
	\node (n8) at (-2,3) {};
	\node (n9) at (-1,3) {};
	\draw[->-, thick] (n2) to (n1);
	\draw[double, thick] (n3) to (n1);
	\draw[double, thick] (n2) to (n4);
	\draw[->-, thick] (n2) to (n5);
	\draw[->-, thick] (n3) to (n6);
	\draw[->-, thick] (n3) to (n7);
	\draw[->-, thick] (n4) to (n8);
	\draw[->-, thick] (n4) to (n9);
\end{tikzpicture}
\caption{$sl_4$ web interpretation for $R_{5,3}$, with base (left) and two non-equivalent choices for each p-star (right)}
\label{fig: sl4 webs}
\end{figure}
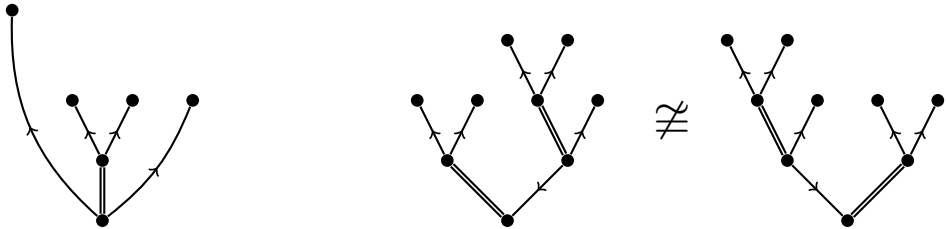

\begin{figure}[h!]
\begin{tikzpicture}
	[scale=.8,auto=left,every node/.style={circle,fill=black,inner sep=1.7pt}]
	\node (n0) at (0,-.8) {};	
	\node (n1) at (0,0) {};
	\node (n2) at (-.5,.8) {};
	\node (n3) at (.5,.8) {};
	\node (n4) at (-1,1.6) {};
	\node (n5) at (0,1.6) {};
	\draw[->-, thick] (n1) to (n0);
	\draw[double, thick] (n1) to (n2);
	\draw[->-, thick] (n1) to (n3);
	\draw[->-, thick] (n2) to (n4);
	\draw[->-, thick] (n2) to (n5);
\end{tikzpicture}
\hspace{.15in}
\raisebox{30pt}{\scalebox{1.5}{$\cong$}}
\hspace{.15in}
\begin{tikzpicture}
	[scale=.8,auto=left,every node/.style={circle,fill=black,inner sep=1.7pt}]
	\node (n0) at (0,-.8) {};
	\node (n1) at (0,0) {};
	\node (n2) at (-.5,.8) {};
	\node (n3) at (.5,.8) {};
	\node (n4) at (0,1.6) {};
	\node (n5) at (1,1.6) {};
	\draw[->-, thick] (n1) to (n0);
	\draw[->-, thick] (n1) to (n2);
	\draw[double, thick] (n1) to (n3);
	\draw[->-, thick] (n3) to (n4);
	\draw[->-, thick] (n3) to (n5);
\end{tikzpicture}
\caption{One of Kim's $sl_4$ web relations}
\label{fig: sl4 web relation}
\end{figure}
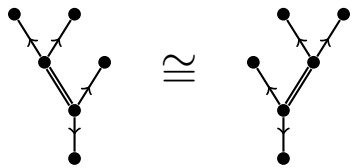

The reason that Conjecture \ref{thm: sln webs, constant} cannot be proven at this point is that we lack a general proof of the fact that, in the $(n+1,n-1)$-coral diagram interpretation, there is precisely one valid choice for our base and precisely $(n-2)$ non-equivalent choices for each $(n+1)$-star.  Furthermore, it would need to be shown that the different ``pieces" of the coral diagram do not interact and allow for additional relations  that cannot be localized to differences in the base or within a single coral.

In light of the generating $sl_n$ web relations of \cite{CKM}, it appears that the only relation capable of directly relating a pair of distinct tree webs is the aforementioned $I = H$ relation (Relation 2.6, \cite{CKM}).  This relation should allow for a direct justification of the fact that there are $n-2$ distinct choices for each $(n+1)$-star in the above interpretation.  However, it would still need to be shown that two tree webs cannot be connected via a string of other relations that pass through at least one non-tree $sl_n$ web.


On a more basic level, notice that the local relations of \cite{CKM} do not result in simple global conditions for determining whether an $sl_n$ web is a member of a given basis: in the case of $sl_3$ webs the non-elliptic condition provides an easy way to recognize whether a given web is an element of the non-elliptic basis, but there is no similarly tractable condition for recognizing basis webs in the $n>3$ case.  Also notice that the constructive $sl_n$ web bases developed in \cite{West} and \cite{Fon} aren't well-suited to proving our conjecture, as there isn't a reasonable way to determine which inputs to their growth algorithms yield webs that are connected trees.  Even if these bases could be used to show that there are at least $n+2$ non-equivalent choices for each $(n+1)$-star, they cannot be easily applied to prove there are not additional tree webs that are equivalent to elements of the resulting set.

We also conjecture that Proposition \ref{thm: sl3 webs, non-constant string} for $sl_3$ webs may be generalized to $sl_n$ webs with a boundary string of the form $(j \ 1 \ 1 \hdots 1)$.  The desired result is given in Conjecture \ref{thm: sln webs, non-constant}.  If true, this result would give a combinatorial interpretation of $R_{p,r}(k)$ for all $k \geq 0$ whenever $1 \leq r \leq p-2$.  

\begin{conjecture}
\label{thm: sln webs, non-constant}
For any $n \geq 3$ and any $1 \leq j \leq n-1$, $(n-2)^k R_{n-1,n-j}(k)$ equals the number of connected $sl_n$ webs that lack an internal cycle and have a boundary string consisting of one $j$ followed by $nk + n - j$ consecutive 1's.
\end{conjecture}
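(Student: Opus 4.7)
The plan is to adapt the $sl_3$ argument of Proposition \ref{thm: sl3 webs, non-constant string} to general $sl_n$, in parallel with the strategy sketched for Conjecture \ref{thm: sln webs, constant}. The goal is a bijection between connected $sl_n$ tree webs with the specified boundary string and coral diagrams of the type enumerated by Theorem \ref{thm: coral enumeration Raney numbers} (with parameters matching the Raney number in the statement), each of whose $k$ interior stars has been decorated with one of $n-2$ inequivalent edge-labelings inherited from the $sl_n$ web calculus.

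First I would construct the base of the coral diagram by taking the distinguished $j$-labeled boundary edge and wrapping it around in the manner of Proposition \ref{thm: sl3 webs, non-constant string}, so that the wrapped $j$-edge plays the role of the generic ``leftmost unused edge''. The vertex where the wrapped edge meets the rest of the web is forced by the local $sl_n$ vertex conditions (edge-label conservation and orientation compatibility) and should produce a unique base carrying exactly $n-j$ attachment sites for the subsequent stars, matching the $r$-parameter of the relevant Raney number. Second, I would show that each interior star has exactly $n-2$ inequivalent realizations as a fragment of an $sl_n$ web whose external edges are all labeled $1$ and oriented away from the base. This should come from a local enumeration of admissible labelings of the two internal edges of such a fragment, together with the ``$I=H$'' move (Relation 2.6 of \cite{CKM}), which collapses each equivalence class to a single representative. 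With these two pieces in place, Theorem \ref{thm: coral enumeration Raney numbers} immediately delivers the desired count, since every underlying tree shape contributes $(n-2)^k$ decorated variants indexed by its $k$ stars.

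The main obstacle, inherited from the discussion that immediately precedes this conjecture, is global rather than local. Once one has established that local relations account for exactly $(n-2)^k$ labeled coral diagrams per underlying tree shape, it remains to verify that no further identifications arise from chains of $sl_n$ web relations that temporarily leave the class of connected tree webs. Local moves such as $I = H$ act within the tree class, but the generating set of \cite{CKM} also includes square- and bigon-reducing relations whose interaction with labeled tree webs is not well understood, and the constructive growth algorithms of \cite{West} and \cite{Fon} cannot easily certify that two tree webs with distinct coral-diagram realizations are truly inequivalent. Overcoming this barrier likely requires either a direct global criterion for membership in the $sl_n$ web basis generalizing the non-elliptic condition in the $sl_3$ setting, or a confluence argument showing that any chain of relations between two tree webs may be rewritten so as to remain entirely within the tree subclass.
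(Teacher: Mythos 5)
This statement is an open conjecture in the paper; the authors supply no proof, and your proposal does not supply one either. What you have written is essentially the same heuristic plan the authors themselves sketch in the discussion surrounding Conjectures \ref{thm: sln webs, constant} and \ref{thm: sln webs, non-constant}, together with an honest admission that the decisive steps are missing. Concretely, two things are asserted but never established: (i) that the wrapped-$j$-edge base is unique and carries exactly $n-j$ attachment sites, and that each $(n+1)$-star admits exactly $n-2$ inequivalent realizations --- you say this ``should come from'' a local enumeration plus the $I=H$ relation of \cite{CKM}, but no such enumeration is carried out, and for $j>1$ the base vertex involves a $j$-labeled edge whose admissible local configurations are not analyzed at all; and (ii) the global independence problem --- that two tree webs with distinct decorated coral diagrams cannot be related by a chain of relations passing through non-tree webs. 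You correctly identify (ii) as the obstruction, but identifying an obstruction is not the same as overcoming it; absent a tractable basis-membership criterion for $sl_n$ webs with $n>3$ (the analogue of non-ellipticity), the bijection you describe is only a surjection-candidate onto an undetermined quotient, and Theorem \ref{thm: coral enumeration Raney numbers} cannot ``immediately deliver'' the count.

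A separate concrete issue: you take ``the Raney number in the statement'' at face value, but $R_{n-1,n-j}(k)$ is inconsistent with the proven $sl_3$ case. For $n=3$ the distinguished boundary letter is the dual label $j=2$, and Proposition \ref{thm: sl3 webs, non-constant string} gives $R_{4,1}(k)$, whereas the conjecture's formula would give $R_{2,1}(k)$. An edge count confirms the first parameter must be $p=n+1$ (each star contributes $p-1=n$ new boundary edges, matching $nk+n-j+1$ total boundary letters with $r=n-j$), so the statement should read $R_{n+1,n-j}(k)$. Any serious attempt at this conjecture needs to begin by correcting the parameters; your base-and-star construction, as described, actually builds $(n+1,n-j)$-coral diagrams, not $(n-1,n-j)$-ones, so your own sketch silently contradicts the formula you set out to prove.
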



\bibliographystyle{amsplain}
\bibliography{biblio}

\providecommand{\bysame}{\leavevmode\hbox to3em{\hrulefill}\thinspace}
\providecommand{\MR}{\relax\ifhmode\unskip\space\fi MR }
\providecommand{\MRhref}[2]{%
  \href{http://www.ams.org/mathscinet-getitem?mr=#1}{#2}
}
\providecommand{\href}[2]{#2}
\begin{thebibliography}{10}

\bibitem{BD}
Jonathan~E. Beagley and Paul Drube, \emph{Enumerative combinatorics of $sl_3$
  webs}, in preparation.

\bibitem{CKM}
Sabin Cautis, Joel Kamnitzer, and Scott Morrison, \emph{Webs and quantum skew
  {H}owe duality}, arXiv:1210.6437 [math.RT] (2014).

\bibitem{Fon}
Bruce Fontaine, \emph{Generating basis webs for ${SL}_n$}, Advances in
  Mathematics \textbf{229} (2012), 2792--2817.

\bibitem{HPT}
F.~Harary, G.~Prins, and W.~Tutte, \emph{The number of plane trees}, Indag.
  Math. \textbf{26} (1964), 319--329.

\bibitem{HP}
Peter Hilton and Jean Pedersen, \emph{Catalan numbers, their generalization,
  and their uses}, The Math. Intelligencer \textbf{13} (1991), 64--75.

\bibitem{Kim}
Dongseok Kim, \emph{Graphical calculus on representations of quantum {L}ie
  algebras}, arXiv:math/0310143 [math.QA], PhD Thesis, UC Davis (2003).

\bibitem{Klarner}
David~A. Klarner, \emph{Correspondences between plane trees and binary
  sequences}, Journal of Comb. Theory \textbf{9} (1970), 401--411.

\bibitem{Kup}
Greg Kuperberg, \emph{Spiders for rank 2 {L}ie algebras}, Comm. Math. Phys.
  \textbf{180} (1996), 109--151.

\bibitem{Mlot}
Wojeiech M{\l}otkowski, \emph{Fuss-catalan numbers in noncommutative
  probability}, Documenta Mathematica \textbf{15} (2010), 939--955.

\bibitem{Mor}
Scott Morrison, \emph{A diagrammatic category for the representation theory of
  ${U}_q(sl_n)$}, arXiv:0704.1503 [math.QA], PhD Thesis, UC Berkeley (2007).

\bibitem{PPS}
K.~Petersen, P.~Pylyavskyy, and B.~Rhoades, \emph{Promotion and cylic sieving
  via webs}, J. Algebraic Combin. \textbf{30} (2009), 19--41.

\bibitem{Raney}
George~N. Raney, \emph{Functional composition patterns and power series
  reversion}, Trans. {A}mer. {M}ath. {S}oc. \textbf{94} (1960), 441--451.

\bibitem{stanley}
Richard Stanley, \emph{Enumerative combinatorics vol. 2}, Cambridge University
  Press, 1999.

\bibitem{Tym}
Julianna Tymoczko, \emph{A simple bijection between standard $3 \times n$
  tableaux and irreducible webs for $sl_3$}, J. Algebraic Combin. \textbf{35}
  (2012), 611--632.

\bibitem{West}
Bruce~W. Westbury, \emph{Web bases for the general linear groups}, J. Algebraic
  Combin. \textbf{35} (2012), 93--107.

\end{thebibliography}

\end{document}